\numberwithin{equation}{section}
\numberwithin{figure}{section}
\theoremstyle{plain}
\newtheorem{thm}{\protect\theoremname}[section]
  \theoremstyle{definition}
  \newtheorem{defn}[thm]{\protect\definitionname}
  \theoremstyle{plain}
  \newtheorem{lem}[thm]{\protect\lemmaname}
  \theoremstyle{plain}
  \newtheorem{algorithm}[thm]{\protect\algorithmname}
  \theoremstyle{remark}
  \newtheorem{rem}[thm]{\protect\remarkname}
  \theoremstyle{definition}
  \newtheorem{example}[thm]{\protect\examplename}
  \theoremstyle{plain}
  \newtheorem{prop}[thm]{\protect\propositionname}
\newcommand{\intr}{\mbox{\rm int}}
\newcommand{\cone}{\mbox{\rm cone}}
  \providecommand{\algorithmname}{Algorithm}
  \providecommand{\definitionname}{Definition}
  \providecommand{\examplename}{Example}
  \providecommand{\lemmaname}{Lemma}
  \providecommand{\propositionname}{Proposition}
  \providecommand{\remarkname}{Remark}
\providecommand{\theoremname}{Theorem}
\begin{document}
\title[First order optimization methods with feasibility]{First order constrained optimization algorithms with feasibility updates} 

\subjclass[2010]{90C25, 68Q25, 47J25}
\begin{abstract}
We propose first order algorithms for convex optimization problems
where the feasible set is described by a large number of convex inequalities
that is to be explored by subgradient projections. The first algorithm
is an adaptation of a subgradient algorithm, and has convergence rate
$1/\sqrt{k}$. The second algorithm has convergence rate $1/k$ when
(1) one has linear metric inequality in the feasible set, (2) the
objective function is strongly convex, differentiable and has Lipschitz
gradient, and (3) it is easy to optimize the objective function on
the intersection of two halfspaces. This second algorithm generalizes
Haugazeau's algorithm. The third algorithm adapts the second algorithm
when condition (3) is dropped. We give examples to show that the second
algorithm performs poorly when the objective function is not strongly
convex, or when the linear metric inequality is absent.  
\end{abstract}

\author{C.H. Jeffrey Pang}

\curraddr{Department of Mathematics\\ 
National University of Singapore\\ 
Block S17 08-11\\ 
10 Lower Kent Ridge Road\\ 
Singapore 119076 }

\email{matpchj@nus.edu.sg}

\date{\today{}}

\keywords{first order algorithms, alternating projections, feasibility, Haugazeau's
algorithm.}

\maketitle
\tableofcontents{}

\section{Introduction}

Let $f:\mathbb{R}^{n}\to\mathbb{R}$ and $f_{j}:\mathbb{R}^{n}\to\mathbb{R}$,
where $j\in\{1,\dots,m\}$, be convex functions. Let $Q\subset\mathbb{R}^{n}$
be a closed convex set. The problem that we study in this paper is
\begin{eqnarray}
 & \min & f(x)\label{eq:main-pblm}\\
 & \mbox{s.t.} & f_{j}(x)\leq0\mbox{ for }j\in\{1,\dots,m\}\nonumber \\
 &  & x\in Q.\nonumber 
\end{eqnarray}
If $m$ is large, then it might be difficult for an algorithm to find
an $x$ satisfying the stated constraints, let alone solve the optimization
problem. We now recall material relevant with our approach for trying
to solve \eqref{eq:main-pblm}.

\subsection{Projection methods for solving feasibility problems}

For finitely many closed convex sets $C_{1},\dots,C_{m}$ in $\mathbb{R}^{n}$,
the \emph{Set Intersection Problem }(SIP) is stated as:
\begin{equation}
\mbox{(SIP):}\quad\mbox{Find }x\in C:=\bigcap_{j=1}^{m}C_{j}\mbox{, where }C\neq\emptyset.\label{eq:SIP}
\end{equation}
The SIP is also referred to as feasibility problems in the literature.
When $m$ is large, the Method of Alternating Projections (MAP) is
a reasonable way to solve the SIP. As its name suggests, the MAP finds
the sequence $\{x_{k}\}_{k=1}^{\infty}$ by projecting onto the $C_{j}$
cyclically, i.e., $x_{k+1}=P_{C_{k^{\prime}}}(x_{k})$, where $k^{\prime}$
is the number in $\{1,\dots,m\}$ such that $m$ divides $k-k^{\prime}$.
We refer the reader to \cite{BB96_survey,BR09,EsRa11}, as well as
\cite[Chapter 9]{Deustch01} and \cite[Subsubsection 4.5.4]{BZ05},
for more on the literature of using projection methods to solve the
SIP.

The convergence rate of the MAP is linear under the assumption of
linear regularity. The notion was introduced and studied by \cite{Bauschke_thesis}
(Definition 4.2.1, page 53) in a general setting of a Hilbert space.
See also \cite{BB96_survey} (Definition 5.6, page 40). Recently,
it has been studied in \cite{Deutsch_Hundal_2006a,Deutsch_Hundal_2006b,Deutsch_Hundal_2008}.
The connection with the stability under perturbation of the sets $C_{j}$
is investigated in \cite{Kruger04,Kruger_06} and other works.

Another problem closely related to the SIP is the \emph{Best Approximation
Problem} (BAP), stated as 
\begin{eqnarray}
\mbox{(BAP):}\quad & \underset{x\in X}{\min} & \frac{1}{2}\|x-x_{0}\|^{2}\label{eq:Proj-pblm}\\
 & \mbox{s.t. } & x\in C:=\bigcap_{j=1}^{m}C_{j}.\nonumber 
\end{eqnarray}
In other words, the BAP is the problem of finding the projection of
$x_{0}$ onto $C$. The BAP follows the template of \eqref{eq:main-pblm}
when $f(x)=\frac{1}{2}\|x-x_{0}\|^{2}$, $f_{j}(x)=d(x,C_{j})$ for
each $j\in\{1,\dots,m\}$, and $Q=\mathbb{R}^{n}$. Dykstra's algorithm
\cite{Dykstra83,BD86} is a projection algorithm for solving the BAP.
It was rediscovered in \cite{Han88} using mathematical programming
duality. Another algorithm is Haugazeau's algorithm \cite{Haugazeau68}
(see \cite{BauschkeCombettes11}). The convergence rate of Dykstra's
algorithm has been analyzed in the polyhedral case \cite{Deutsch_Hundal_rate_Dykstra,Xu_rate_Dykstra},
but little is known about the general convergence rates of Dykstra's
and Haugazeau's Algorithms.

For more on the background and recent developments of the MAP and
its variants, we refer the reader to \cite{BB96_survey,BR09,EsRa11},
as well as \cite[Chapter 9]{Deustch01} and \cite[Subsubsection 4.5.4]{BZ05}.

\subsection{First order algorithms and algorithms for \eqref{eq:main-pblm}}

First order methods in optimization are methods based on function
values and gradient evaluations. Even though first order methods have
a slower rate of convergence than other algorithms, the advantage
of first order algorithms is that each iteration is easy to perform.
For large scale problems, algorithms with better complexity require
too much computational effort to perform each iteration, so first
order algorithms can be the only practical method. Classical references
include \cite{Nemirovsky_Yudin,Nesterov_1983,Nesterov_1984,Nesterov_1989},
and newer references include \cite{Nesterov_book,JuditskyNemirovski_survey_a,JuditskyNemirovski_survey_b}.
See also \cite{BeckTeboulle2009}.

As far as we are aware, the problem \eqref{eq:main-pblm} where projections
are used to address the feasibility of solutions are studied in \cite{Nedic_MP_2011,Wang_Bertsekas_2015}.
In both papers, the approach is to use random projection methods,
while the second paper focuses on the generalized setting of variational
inequalities.

\subsection{Contributions of this paper}

In Section \ref{sec:subgrad-alg}, we modify the subgradient algorithm
in \cite[Section 3.2.4]{Nesterov_book} for solving \eqref{eq:main-pblm}
so that the new algorithm is more suitable for solving the problem
\eqref{eq:main-pblm} when $m$ is large. When the functions $\{f_{j}\}_{j=1}^{m}$
satisfy the linear metric inequality property in Definition \ref{def:lin-metric-ineq},
we show that projection methods can be used instead. The algorithms
in this section have $O(1/\sqrt{k})$ convergence rate to the optimal
objective value, just like the subgradient algorithm. 

The convergence of projection algorithms for the SIP \eqref{eq:SIP}
is linear when a linear metric inequality condition is satisfied.
Furthermore, the convergence of first order algorithms for strongly
convex functions with Lipschitz gradient to the objective value and
the unique optimal solution is linear. It is therefore natural to
look at the convergence rate of \eqref{eq:main-pblm} when 
\begin{enumerate}
\item [(1)] the functions $\{f_{j}\}_{j=1}^{m}$ satisfy linear metric
inequality, and 
\item [(2)] $f(\cdot)$ is strongly convex, differentiable and has Lipschitz
gradient.
\end{enumerate}
In Section \ref{sec:gen-Haugazeau}, we generalize Haugazeau's algorithm
to obtain a first order algorithm to solve \eqref{eq:main-pblm} for
the case when (1) and (2) are satisfied, and 
\begin{enumerate}
\item [(3)] $f(\cdot)$ is structured enough to optimize over the intersection
of two halfspaces. 
\end{enumerate}
Our algorithms have a $O(1/k)$ convergence rate to the optimal objective
value and $O(1/\sqrt{k})$ convergence to the optimizer. We believe
that such a convergence rate for Haugazeau's algorithm is new. 

In Section \ref{sec:gen-constrained-opt}, we propose a first order
algorithm to solve \eqref{eq:main-pblm} when (1) and (2) are satisfied,
but not (3). The convergence rate to the optimal objective value and
to the optimizer are slightly worse than the algorithms in Section
\ref{sec:gen-Haugazeau}.

In Section \ref{sec:effectiveness}, we show that in the case where
the dimension and number of constraints are large, then a $(1/k)$
convergence rate is best possible for strongly convex problems in
a model generalizing Haugazeau's algorithm, while an arbitarily slow
convergence rate applies when there is convexity but no strong convexity
in the objective function.

In Section \ref{sec:Behavior-Haugazeau}, we show that the $O(1/k)$
rate of convergence of Haugazeau's algorithm to the objective value
occurs even for a very simple example. We give a second example to
show that Haugazeau's algorithm converges arbitrarily slowly in the
absence of linear metric inequality.

\section{Preliminaries }

In this section, we recall some results that will be necessary for
the understanding of this paper. We start with strongly convex functions.
\begin{defn}
(Strongly convex functions) We say that $f:\mathbb{R}^{n}\to\mathbb{R}$
is \emph{strongly convex with convexity parameter $\mu$} if 
\[
f(y)\geq f(x)+\langle f^{\prime}(x),y-x\rangle+\frac{\mu}{2}\|x-y\|^{2}\mbox{ for all }x,y\in\mathbb{R}^{n}.
\]
Denote the set $\mathcal{S}_{\mu,L}^{1,1}$ to be the set of all functions
$f:\mathbb{R}^{n}\to\mathbb{R}$ such that $f(\cdot)$ is strongly
convex with parameter $\mu$ and $f^{\prime}(\cdot)$ is Lipschitz
with constant $L$. 
\end{defn}
We recall some standard results and notation on the method of alternating
projections that will be used in the rest of the paper. 
\begin{lem}
\label{lem:attractive-ppty-of-projection}(Attractive property of
projection) Let $C\subset\mathbb{R}^{n}$ be a closed convex set.
Then $P_{C}:X\to X$ is \emph{1-attracting with respect to $C$}:
\[
\|P_{C}(x)-x\|^{2}\leq\|x-y\|^{2}-\|P_{C}(x)-y\|^{2}\mbox{ for all }x\in\mathbb{R}^{n}\mbox{ and }y\in C.
\]
\end{lem}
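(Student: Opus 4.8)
The plan is to prove the inequality
\[
\|P_{C}(x)-x\|^{2}\leq\|x-y\|^{2}-\|P_{C}(x)-y\|^{2}
\]
directly from the variational characterization of the projection onto a closed convex set. First I would recall that for a closed convex set $C\subset\mathbb{R}^{n}$ and any $x\in\mathbb{R}^{n}$, the point $P_{C}(x)$ is characterized by the obtuse-angle condition
\[
\langle x-P_{C}(x),\,z-P_{C}(x)\rangle\leq 0\quad\text{for all }z\in C.
\]
Since $y\in C$, applying this with $z=y$ gives $\langle x-P_{C}(x),\,y-P_{C}(x)\rangle\leq 0$.

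The key computation is then the parallelogram-type expansion. Writing $x-y=(x-P_{C}(x))+(P_{C}(x)-y)$ and expanding,
\[
\|x-y\|^{2}=\|x-P_{C}(x)\|^{2}+2\langle x-P_{C}(x),\,P_{C}(x)-y\rangle+\|P_{C}(x)-y\|^{2}.
\]
The cross term equals $-2\langle x-P_{C}(x),\,y-P_{C}(x)\rangle$, which is $\geq 0$ by the variational inequality above. Hence $\|x-y\|^{2}\geq\|x-P_{C}(x)\|^{2}+\|P_{C}(x)-y\|^{2}$, which rearranges to the claim. Equivalently, one can phrase this as: the angle at $P_{C}(x)$ in the triangle with vertices $x$, $P_{C}(x)$, $y$ is obtuse (or right), so the squared side opposite to it dominates the sum of the other two squared sides.

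There is no real obstacle here; the only thing to be careful about is citing or establishing the obtuse-angle characterization of $P_{C}$, which is itself immediate from expanding $\|x-(P_{C}(x)+t(z-P_{C}(x)))\|^{2}$ for $t\in(0,1]$, using convexity of $C$ to ensure the argument lies in $C$, and letting $t\downarrow 0$. This is entirely standard, so in the write-up I would either quote it as well known or include the one-line argument. The whole proof is a couple of lines of algebra once that characterization is in hand.
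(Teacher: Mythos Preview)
Your proof is correct and is the standard argument. The paper itself does not give a proof of this lemma; it is stated in the preliminaries as a recalled well-known fact, so there is nothing to compare against.
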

\begin{defn}
(Fej\'{e}r monotone sequence) Let $C\subset\mathbb{R}^{n}$ be a
closed convex set and let $\{x_{k}\}$ be a sequence in $\mathbb{R}^{n}$.
We say that $\{x_{k}\}$ is\emph{ Fej\'{e}r monotone with respect
to $C$} if 
\[
\|x_{k+1}-c\|\leq\|x_{k}-c\|\mbox{ for all }c\in C\mbox{ and }i=1,2,\dots
\]

\end{defn}
Consider the SIP \eqref{eq:SIP} and the method of alternating projections
described shortly after. The 1-attractiveness property leads to the
Fej\'{e}r monotonicity of the sequence $\{x_{k}\}_{k=1}^{\infty}$
with respect to $C=\cap_{j=1}^{m}C_{j}$. The Fej\'{e}r monotonicity
property will be used in the proof of Theorem \ref{thm:subgrad-conv-rate}.

A stability property that guarantees the linear convergence of the
MAP is defined below.
\begin{defn}
\label{def:lin-metric-ineq}(Linear metric inequality) Let $f_{j}:\mathbb{R}^{n}\to\mathbb{R}$
be convex functions for $j\in\{1,\dots,m\}$. Let $C:=\{x:\max_{1\leq j\leq m}f_{j}(x)\leq0\}$.
Let $x\in\mathbb{R}^{n}$. If $f_{j}(x)>0$, then choose any $\bar{g}_{j}\in\partial f_{j}(x)$
and let the halfspace $H_{j}$ be 
\[
H_{j}:=\{y:f_{j}(x)+\langle\bar{g}_{j},y-x\rangle\leq0\}.
\]
Otherwise, let $H_{j}=\mathbb{R}^{n}$. We say that $\{f_{j}(\cdot)\}_{j=1}^{m}$
satisfies \emph{linear metric inequality with parameter $\kappa>0$
}if 
\begin{equation}
d(x,C)\leq\kappa\max_{1\leq j\leq m}d(x,H_{j})\mbox{ for all }x\in\mathbb{R}^{n}.\label{eq:gen-lin-metric-ineq}
\end{equation}

\end{defn}
In the case where $f_{j}(x)=d(x,C_{j})$ for some closed convex set
$C_{j}$, then $\partial f_{j}(x)=\left\{ \frac{x-P_{C_{j}}(x)}{\|x-P_{C_{j}}(x)\|}\right\} $
and $\|x-P_{C_{j}}(x)\|=d(x,C_{j})$ (This fact is well known. See
for example \cite[Proposition 18.22]{BauschkeCombettes11}.). So $d(x,H_{j})=d(x,C_{j})$,
and \eqref{eq:gen-lin-metric-ineq} reduces to the well known linear
metric inequality (which is sometimes referred to as linear regularity)
for collections of convex sets. A local version of the linear metric
inequality is often defined for the local convergence of projection
algorithms. But for this paper, we shall use the global version defined
above to simplify our analysis.

\subsection{\label{sub:dual-active-set-QP}Using quadratic programming to accelerate
projection algorithms}

One way to accelerate projection algorithms for solving the SIP \eqref{eq:SIP}
is to collect the halfspaces produced by the projection process and
use a quadratic program (QP) to project onto the intersection of these
halfspaces. See \cite{cut_Pang12} for more on this acceleration.
The material in this subsection can be skipped in understanding the
main contributions of the paper. But we feel that a brief mention
of this acceleration can be useful because it shows how developments
in projection methods for solving the SIP can be incorporated in the
algorithms of this paper. 

A QP can be written as 
\begin{eqnarray*}
 & \min & \frac{1}{2}\|x-x_{0}\|^{2}\\
 & \mbox{s.t.} & x\in\cap_{i=1}^{m}H_{i},
\end{eqnarray*}
where $H_{i}$ are halfspaces. If $m$ is small, then the optimal
solution can be found with an efficient QP algorithm once the QR factorization
of the normals of $H_{i}$ are obtained. 

If $m$ is large, then trying to solve the QP would defeat the purpose
of using first order algorithms. We suggest using the dual active
set QP algorithm of \cite{Goldfarb_Idnani}. The $k$th iteration
updates a solution $x_{k}$ and an active set $S_{k}\subset\{1,\dots,m\}$
such that $x_{k}\in\partial H_{i}$ for all $i\in S_{k}$ and $x_{k}=P_{\cap_{i\in S_{k}}H_{i}}(x_{0})$.
The algorithm of \cite{Goldfarb_Idnani} has two advantages:
\begin{enumerate}
\item Each iteration involves relatively cheap updates of the QR factorization
of the normals of the active constraints and solving at most $|S_{k}|$
linear systems of size at most $|S_{k}|$. 
\item The distance $d(x_{0},\cap_{i\in S_{k}}H_{i})=\|x_{0}-x_{k}\|$ is
strictly increasing till it reaches $d(x_{0},\cap_{i=1}^{m}H_{i})$. 
\end{enumerate}
So if the QP were not solved to optimality, each iteration gives a
halfspace $\bar{H}_{k}=\{x:\langle x_{0}-x_{k},x-x_{k}\rangle\leq0\}$
such that $\bar{H}_{k}\supset\cap_{i=1}^{m}H_{i}$ and $d(x_{0},\bar{H}_{k})=d(x_{0},\cap_{i\in S_{k}}H_{i})$,
which is strictly increasing by property (2). The size of the active
set $|S_{k}|$ can reduced if some of the halfspaces are aggregated
into a single halfspace, just like in the generalized Haugazeau's
algorithm in Section \ref{sec:gen-Haugazeau}.

To accelerate an alternating projection strategy, the QR factorization
of the normals of the halfspaces containing $x$, (the point where
one projects from) can be used to find a separating halfspace that
is further away from $x$ at the cost of an iteration of the algorithm
in \cite{Goldfarb_Idnani}.

\section{\label{sec:subgrad-alg}A subgradient algorithm for constrained optimization}

In this section, we look at how to adapt \cite[Theorem 3.2.3]{Nesterov_book}
to treat the case where the number of constraints is large. We begin
by describing our algorithm.
\begin{algorithm}
\label{alg:subgradient}(Subgradient algorithm with feasibility updates)
Let $f:\mathbb{R}^{n}\to\mathbb{R}$ and $f_{j}:\mathbb{R}^{n}\to\mathbb{R}$
(where $j\in\{1,\dots,m\}$) be convex functions. Let $Q\subset\mathbb{R}^{n}$
be a closed convex set, and $R>0$ be such that $\|x-y\|\leq R$ for
all $x,y\in Q$. Let 
\begin{eqnarray}
C_{j} & := & \{x:f_{j}(x)\leq0\}\label{eq:C-j-and-C}\\
\mbox{ and }C_{\phantom{j}} & := & \{x:f_{j}(x)\leq0,j=1,\dots,m\}=\cap_{j=1}^{m}C_{j}.\nonumber 
\end{eqnarray}
This algorithm seeks to solve 
\begin{equation}
\min\{f(x):x\in Q,f_{j}(x)\leq0,j=1,\dots,m\}.\label{eq:constrained_pblm}
\end{equation}

01 \textbf{Step 0}. Choose $x_{0}\in Q$ and sequence $\{h_{k}\}_{k=0}^{\infty}$
by 
\[
h_{k}=\frac{R}{\sqrt{k+0.5}}.
\]

02\textbf{ Step 1}: $k$th iteration ($k\geq0$). Use either Step
1A or Step 1B to find $x_{k+1}$:

03\textbf{ Step 1A}. (Supporting halfspace from $x_{k}$). 

04 Find $\bar{g}_{j,k}\in\partial f_{j}(x_{k})$ for all $j\in\{1,\dots,m\}$. 

05 Define the halfspace $H_{j,k}$ by 
\[
H_{j,k}:=\begin{cases}
\{x:f_{j}(x_{k})+\langle\bar{g}_{j,k},x-x_{k}\rangle\leq0\}. & \mbox{ if }f_{j}(x_{k})\geq0\\
\mathbb{R}^{n} & \mbox{ otherwise}
\end{cases}
\]

06 If $\max_{1\leq j\leq m}d(x_{k},H_{j,k})<h_{k}$, then find $g_{k}\in\partial f(x_{k})$
and set 
\begin{equation}
x_{k+1}=P_{Q}\bigg(x_{k}-\frac{h_{k}}{\|g_{k}\|}g_{k}\bigg).\label{eq:add-proj-to-PQ-1}
\end{equation}

07 Otherwise there is a halfspace $H_{k}$ 

08 $\quad$such that $\cap_{j=1}^{m}H_{j,k}\subset H_{k}$ and $d(x_{k},H_{k})\geq h_{k}$.
Set 
\begin{equation}
x_{k+1}=P_{Q}\circ P_{H_{k}}(x_{k}).\label{eq:step-1A-feas-step}
\end{equation}

09\textbf{ Step 1B}. (Alternating projection strategy) 

10 Let $x_{k}^{0}=x_{k}$. 

11 For $j=\{1,\dots,m\}$

\textup{\emph{12 $\quad$Find $\bar{g}_{j,k}\in\partial f_{j}(x_{k}^{j-1})$.}}

\textup{\emph{13 $\quad$Define the halfspace $H_{j,k}$ by 
\[
H_{j,k}:=\begin{cases}
\{x:f(x_{k}^{j-1})+\langle\bar{g}_{j,k},x-x_{k}^{j-1}\rangle\leq0\} & \mbox{ if }f(x_{k}^{j-1})\geq0\\
\mathbb{R}^{n} & \mbox{ otherwise}.
\end{cases}
\]
}}

\textup{\emph{14 $\quad$Set $S_{j,k}$ to be a subset of $\{1,\dots,j\}$
such that $j\in S_{j,k}$.}}

\textup{\emph{15 $\quad$Set $x_{k}^{j}=P_{\cap_{l\in S_{j,k}}H_{l,k}}(x_{k}^{j-1})$.}}

16 End For.

17 If at any point $\sum_{l=1}^{j}\|x_{k}^{l}-x_{k}^{l-1}\|^{2}\geq h_{k}^{2}$,
then set $x_{k+1}=P_{Q}(x_{k}^{j})$. 

18 Otherwise, choose $g_{k}\in\partial f(x_{k}^{m})$ and set 
\begin{equation}
x_{k+1}=P_{Q}\bigg(x_{k}^{m}-\frac{h_{k}}{\|g_{k}\|}g_{k}\bigg).\label{eq:add-proj-to-PQ-2}
\end{equation}

\end{algorithm}
We make a few remarks about Algorithm \ref{alg:subgradient}. Algorithm
\ref{alg:subgradient} is adapted from \cite[Theorem 3.2.3]{Nesterov_book}
so that if $m$ is large, then one may only need to evaluate a few
of $f_{j}(x_{k})$ and $\bar{g}_{j,k}$, where $j\in\{1,\dots,m\}$,
in the $k$th iteration to find $x_{k+1}$. 
\begin{rem}
(Using quadratic programming to accelerate projection algorithms)
The set $S_{j,k}$ in Step 1B can be chosen to be $S_{j}=\{j\}$,
and Step 1B would correspond to an alternating projection strategy.
But if the size $|S_{j,k}|$ is small, then each step can still be
carried out quickly. Depending on the orientation of the sets $C_{j}$
(see \eqref{eq:C-j-and-C}), choosing a larger set $S_{j,k}$ can
accelerate the convergence of the algorithm as the intersection of
more than one of the halfspaces $H_{j,k}$ would be a better approximate
of the set $C$ than a set of the form $C_{j}$. The strategies outlined
in Subsection \ref{sub:dual-active-set-QP} can be applied.
\end{rem}
In order to accelerate convergence, we can take $x_{k+1}=P_{\cap_{l\in S_{k}}H_{l,k}}\circ P_{Q}(y_{k})$
in \eqref{eq:add-proj-to-PQ-1} and \eqref{eq:add-proj-to-PQ-2},
where $S_{k}\subset\{1,\dots,m\}$ and $y_{k}$ is the formula in
$P_{Q}(\cdot)$. A halfspace separating $P_{Q}(y_{k})$ from $\cap_{l=1}^{m}H_{l,k}$
can be found with the strategies in Subsection \ref{sub:dual-active-set-QP}. 
\begin{rem}
(Choices of $x_{k+1}$ in Step 1A) In Step 1A of Algorithm \ref{alg:subgradient},
it is possible that $\max_{1\leq j\leq m}d(x_{k},H_{j,k})<h_{k}$
and there is a halfspace $H_{k}$ such that $\cap_{j=1}^{m}H_{j,k}\subset H_{k}$
and $d(x_{k},H_{k})\geq h_{k}$. The halfspace $H_{k}$ satisfying
the required properties can be found (by the strategies outlined in
Subsection \ref{sub:dual-active-set-QP} for example) before all the
distances $d(x_{k},H_{j,k})$, where $j\in\{1,\dots,m\}$, are evaluated,
so one would carry out the step \eqref{eq:step-1A-feas-step} in such
a case.
\begin{rem}
(Order of evaluating $f_{j}(\cdot)$s) In both Steps 1A and 1B, we
do not have to loop through the functions $\{f_{j}(\cdot)\}_{j=1}^{m}$
in the sequential order. The functions $\{f_{j}(\cdot)\}_{j=1}^{m}$
can be handled in any order that goes through all the indices in $\{1,\dots,m\}$.
If $j\in\{1,\dots,m\}$ is such that $f_{j}(x^{*})<0$ for all optimal
solutions $x^{*}$, then $f_{j}(\cdot)$ shall be evaluated infrequently.
One can also incorporate ideas in \cite{HermanChen08} to find a good
order to cycle through the indices $\{1,\dots,m\}$.
\end{rem}
\end{rem}
Step 1B describes an extended alternating projection procedure to
find a point that is close to $C$. In view of studies in alternating
projections, one is more likely to achieve feasibility by projecting
from the most recently evaluated point $x_{k}^{j}$ instead of $x_{k}$. 
\begin{thm}
\label{thm:subgrad-conv-rate} (Convergence of Algorithm \ref{alg:subgradient})
Consider Algorithm \ref{alg:subgradient}. Let $x^{*}$ be some optimal
solution. Let $f(\cdot)$ be Lipschitz continuous on $B(x^{*},R)$
with constant $M_{1}$ and let $M_{2}$ be 
\[
M_{2}=\max_{1\leq j\leq m}\{\|g\|:g\in\partial f_{j}(x),x\in B(x^{*},R)\}.
\]

(a) If Step 1A was carried throughout, then for any $k\geq3$, there
exists a number $i^{\prime}$, $0\leq i^{\prime}\leq k$ such that
\begin{equation}
f(x_{i^{\prime}})-f^{*}\leq\frac{\sqrt{3}M_{1}R}{\sqrt{k-1.5}}\mbox{ and }\max\{f_{j}(x_{i^{\prime}}):j\in\{1,\dots,m\}\}\leq\frac{\sqrt{3}M_{2}R}{\sqrt{k-1.5}}.\label{eq:subgrad-conv-1}
\end{equation}

(b) Recall the definition of $C$ in \eqref{eq:C-j-and-C}. If Step
1B was carried throughout, $Q=\mathbb{R}^{n}$ and the linear metric
inequality condition is satisfied for some constant $\kappa<\infty$,
then there exists a number $i^{\prime}$, $0\leq i^{\prime}\leq k$
such that 
\begin{equation}
f(x_{i^{\prime}}^{m})-f^{*}\leq\frac{\sqrt{3}M_{1}R}{\sqrt{k-1.5}}\mbox{ and }d(x_{i^{\prime}}^{m},C)\leq\frac{\kappa\sqrt{3m}R}{\sqrt{k-1.5}}.\label{eq:subgrad-conv-2}
\end{equation}
\end{thm}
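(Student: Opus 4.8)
The plan is to run the classical subgradient potential argument with $r_i:=\|x_i-x^*\|^2$ (in part (b), also with the intermediate points $x_i^j$), classifying each iteration as \emph{productive}, when a subgradient step on $f$ is taken (line 06, resp.\ line 18), or as a \emph{feasibility} step, when a projection towards $C$ is taken (lines 07--08, resp.\ line 17). First I would record that the iterates stay bounded: since $x^*$ is feasible we have $x^*\in Q$, hence $Q\subset B(x^*,R)$ because $\operatorname{diam}Q\le R$; combined with Lemma~\ref{lem:attractive-ppty-of-projection} applied to the inner projections of Step 1B, each onto a set containing $C\ni x^*$, this keeps every iterate and every $x_i^j$ in $B(x^*,R)$, so that $\|g\|\le M_1$ for the subgradients of $f$ used and $\|\bar g_{j,\cdot}\|\le M_2$ for those of the $f_j$ (in part (b), where $Q=\mathbb{R}^n$, one instead keeps the iterates bounded by a standard induction). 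The two one-step estimates are the productive bound $r_{i+1}\le r_i-\frac{2h_i}{\|g_i\|}\big(f(x_i)-f^*\big)+h_i^2$ -- from expanding $\|x_i-\tfrac{h_i}{\|g_i\|}g_i-x^*\|^2$, the subgradient inequality $\langle g_i,x_i-x^*\rangle\ge f(x_i)-f^*$, and nonexpansiveness of $P_Q$ -- and the feasibility bound $r_{i+1}\le r_i-h_i^2$, from Lemma~\ref{lem:attractive-ppty-of-projection} with $H_i\ni x^*$ and $d(x_i,H_i)\ge h_i$ (for Step 1B in the telescoped form $r_{i+1}\le r_i-\sum_l\|x_i^l-x_i^{l-1}\|^2\le r_i-h_i^2$). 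Alongside these, on a productive step there is a smallness statement for the constraints: in Step 1A the test $\max_j d(x_i,H_{j,i})<h_i$ forces $\max_j f_j(x_i)<M_2h_i$, since $d(x_i,H_{j,i})=(f_j(x_i))_+/\|\bar g_{j,i}\|$ whenever $f_j(x_i)\ge0$; in Step 1B the bound $\sum_l\|x_i^l-x_i^{l-1}\|^2<h_i^2$ together with the linear metric inequality gives $d(x_i^m,C)<\kappa\sqrt m\,h_i$ (discussed below). The productive iterates -- $x_i$ in part (a), $x_i^m$ in part (b) -- are the candidates for $i'$.

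Next I would telescope these estimates over a window $W=\{p,p+1,\dots,k\}$ with $p$ the largest integer satisfying $h_p\le\sqrt3\,R/\sqrt{k-1.5}$ (so $p$ is of order $k/3$); one checks, as a routine tail estimate on $\sum 1/(i+0.5)$, that this choice also has $\sum_{i\in W}h_i^2>R^2$. With $P_W,F_W$ the productive and feasibility indices in $W$, summing from $p$ to $k$ and using $r_p\le R^2$ and $r_{k+1}\ge0$ yields
\[
\sum_{i\in P_W}\frac{2h_i}{\|g_i\|}\big(f(x_i)-f^*\big)\;\le\;R^2+\sum_{i\in P_W}h_i^2-\sum_{i\in F_W}h_i^2 ,
\]
and in particular $P_W\neq\emptyset$, since otherwise the right side would be $R^2-\sum_{i\in W}h_i^2<0$ while the left side is $0$.

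To close part (a): if some $i\in P_W$ has $f(x_i)<f^*$, take $i'=i$; the objective bound is immediate and the constraint bound follows from $\max_j f_j(x_{i'})<M_2h_{i'}\le M_2h_p\le\sqrt3\,M_2R/\sqrt{k-1.5}$. Otherwise $f(x_i)\ge f^*$ on $P_W$; let $\delta:=\min_{i\in P_W}(f(x_i)-f^*)\ge0$, let $i'$ realize this minimum, and set $A:=\sum_{P_W}h_i^2$, $T:=\sum_W h_i^2>R^2$. Replacing $\|g_i\|$ by $M_1$ and $f(x_i)-f^*$ by $\delta$ above, then using $h_i\ge h_i^2/h_p$ on $W$ and $\sum_{F_W}h_i^2=T-A$, gives $\frac{2\delta}{M_1}\,\frac{A}{h_p}\le R^2+2A-T$; if the right side is nonpositive then $\delta=0$, and otherwise $(R^2+2A-T)/A=2+(R^2-T)/A\le2$, so $\delta\le M_1h_p\le\sqrt3\,M_1R/\sqrt{k-1.5}$. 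The constraint bound at $i'$ is again $\max_j f_j(x_{i'})<M_2h_{i'}\le M_2h_p$. Part (b) follows the same pattern with $x_i,x_{i'}$ replaced by $x_i^m,x_{i'}^m$ (using $\|x_i^m-x^*\|\le\|x_i-x^*\|$ from the telescoped attractive property of the inner sweep); the objective bound comes out identically, and the constraint bound is $d(x_{i'}^m,C)<\kappa\sqrt m\,h_{i'}\le\kappa\sqrt m\,h_p\le\kappa\sqrt{3m}\,R/\sqrt{k-1.5}$.

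The step I expect to be the main obstacle is the one feeding part (b): that a single sweep of the generalized subgradient-projection procedure from $x^0$, under linear metric inequality with parameter $\kappa$, satisfies $\sum_{l=1}^m\|x^l-x^{l-1}\|^2\ge\kappa^{-2}m^{-1}d(x^0,C)^2$, so that on a productive step $d(x_i^m,C)\le d(x_i^0,C)<\kappa\sqrt m\,h_i$. The available ingredients are standard -- $\|x^l-x^{l-1}\|<h_i$ and hence $\sum_l\|x^l-x^{l-1}\|<\sqrt m\,h_i$ by Cauchy--Schwarz, $x^l\in H_{l,i}$ so $d(x^{l-1},H_{l,i})=\|x^l-x^{l-1}\|$ and $f_l(x^{l-1})<M_2h_i$, $1$-Lipschitzness of $d(\cdot,H_{l,i})$, the Lipschitz bounds $M_2$, and $C\subset\bigcap_l H_{l,i}$ -- but combining them is delicate because the linear metric inequality of Definition~\ref{def:lin-metric-ineq} is stated at a single base point, whereas the $H_{l,i}$ are linearizations taken at the moving points $x^{l-1}$; reconciling these, and noting that the relevant subgradient norms cannot degenerate since the problem is feasible, is the technical heart of the argument. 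The only other fiddly point is verifying, for all $k\ge3$, that the window chosen above satisfies $\sum_{i\in W}h_i^2>R^2$, which is a routine computation.
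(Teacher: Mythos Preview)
Your plan is correct and matches the paper's proof in structure: the same potential $r_i=\|x_i-x^*\|^2$, the same dichotomy into productive versus feasibility steps with the same one-step decrements, and the same window of length about $k/3$ over which the integral estimate $\sum_{i=k'}^k h_i^2\ge R^2\ln 3>R^2$ forces a good productive index.

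Two differences are worth noting. First, the paper's extraction of $i'$ is slightly slicker than yours: instead of minimizing $f(x_i)-f^*$ over the productive set and manipulating $A,T$, the paper sets $v_i:=\langle g_i/\|g_i\|,x_i-x^*\rangle$ and argues by contradiction that if $v_i\ge h_i$ for every productive $i$ then \emph{every} step in the window (productive or not) satisfies $r_{i+1}\le r_i-h_i^2$, whence $\sum_{i=k'}^k h_i^2\le R^2$, contradicting the integral estimate. This gives some $i'$ with $v_{i'}<h_{i'}\le h_{k'}$, and then $f(x_{i'})-f^*\le M_1\max\{0,v_{i'}\}$ (Nesterov, Lemma~3.2.1) finishes. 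Your route works too, but the contradiction on $v_i$ avoids the case split and the $A/h_p$ bookkeeping.

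Second, on the point you flag as the main obstacle in~(b): the paper does not resolve the moving-base-point subtlety either. It simply asserts $d(x_k^{j-1},C_j)\le\|x_k^j-x_k^{j-1}\|$, deduces $d(x_{i'},C_j)\le\sum_l\|x_{i'}^l-x_{i'}^{l-1}\|<\sqrt m\,h_{i'}$ by the triangle inequality and Cauchy--Schwarz, and then invokes the \emph{set} form $d(x,C)\le\kappa\max_j d(x,C_j)$ of linear metric inequality (the form obtained from Definition~\ref{def:lin-metric-ineq} when $f_j=d(\cdot,C_j)$; see the remark following that definition). In other words, the paper sidesteps rather than reconciles the halfspace-at-a-single-point formulation with the moving linearization points, effectively working in the setting $f_j=d(\cdot,C_j)$. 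So you should not expect to find a more delicate reconciliation argument here; the intended step is just the triangle inequality from $x_{i'}=x_{i'}^0$ to $x_{i'}^{j-1}$ combined with $d(x_{i'}^{j-1},C_j)\le\|x_{i'}^j-x_{i'}^{j-1}\|$, followed by Fej\'er monotonicity to pass from $x_{i'}$ to $x_{i'}^m$.
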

\begin{proof}
We first prove for Step 1A. Let $k^{\prime}=\left\lfloor k/3\right\rfloor $
and 
\[
I_{k}=\bigg\{ i\in[k^{\prime},\dots,k]:x_{k+1}=P_{Q}\bigg(x_{k}-\frac{h_{k}}{\|g_{k}\|}g_{k}\bigg)\bigg\}.
\]
When $i\notin I_{k}$, we have $\|x_{k+1}-x^{*}\|^{2}\leq\|x_{k}-x^{*}\|^{2}-h_{k}^{2}$
from the 1-attractiveness of the projection operation. When $i\in I_{k}$,
we have 
\begin{eqnarray}
\|x_{i+1}-x^{*}\|^{2} & \leq & \left\Vert \left[x_{i}-\frac{h_{i}}{\|g_{i}\|}g_{i}\right]-x^{*}\right\Vert ^{2}\label{eq:chain-in-I-k}\\
 & \leq & \|x_{i}-x^{*}\|^{2}+h_{i}^{2}-2h_{i}\bigg\langle\frac{g_{i}}{\|g_{i}\|},x_{i}-x^{*}\bigg\rangle.\nonumber 
\end{eqnarray}
Summing up these inequalities for $i\in[k^{\prime},\dots,k]$ gives
\[
\|x_{k+1}-x^{*}\|^{2}\leq\|x_{k^{\prime}}-x^{*}\|^{2}-\sum_{i\in I_{k}}\bigg[2h_{i}\bigg\langle\frac{g_{i}}{\|g_{i}\|},x_{k}-x^{*}\bigg\rangle-h_{i}^{2}\bigg]-\sum_{i\notin I_{k}}h_{i}^{2}.
\]
Let $v_{i}=\langle\frac{g_{i}}{\|g_{i}\|},x_{i}-x^{*}\rangle$. Seeking
a contradiction, assume that $v_{i}\geq h_{i}$ for all $i\in I_{k}$.
Then 
\[
\sum_{i=k^{\prime}}^{k}h_{i}^{2}=\sum_{i\in I_{k}}h_{i}^{2}+\sum_{{i\notin I_{k}\atop i\in[k^{\prime},\dots,k]}}h_{i}^{2}\leq\|x_{k^{\prime}}-x^{*}\|^{2}\leq R^{2},
\]
which gives 
\[
1\geq\frac{1}{R^{2}}\sum_{i=k^{\prime}}^{k}h_{i}^{2}=\sum_{i=k^{\prime}}^{k}\frac{1}{i+0.5}\geq\int_{k^{\prime}}^{k+1}\frac{d\tau}{\tau+0.5}=\ln\frac{2k+3}{2k^{\prime}+1}\geq\ln3.
\]
This is a contradiction. Thus $I_{k}\neq\emptyset$ and there exists
some $i^{\prime}\in I_{k}$ such that $v_{i^{\prime}}<h_{i^{\prime}}$.
Clearly, for this number we have $v_{i^{\prime}}\leq h_{k^{\prime}}$.
Lemma 3.2.1 in \cite{Nesterov_book} shows that $f(x_{i^{\prime}})-f(x^{*})\leq M_{1}\max\{0,v_{i^{\prime}}\}$.
So 
\begin{equation}
f(x_{i^{\prime}})-f(x^{*})\leq M_{1}h_{k^{\prime}},\label{eq:val-ineq}
\end{equation}
which implies the first part of \eqref{eq:subgrad-conv-1}. 

We now prove the second part of \eqref{eq:subgrad-conv-1}. Since
$i^{\prime}\in I_{k}$, we have $d(x_{i^{\prime}},H_{j,i^{\prime}})\leq h_{i^{\prime}}$
for all $j\in\{1,\dots,m\}$. We can calculate that $d(x_{i^{\prime}},H_{j,i^{\prime}})=\frac{f_{j}(x_{i^{\prime}})}{\|\bar{g}_{j,i^{\prime}}\|}$.
Therefore, $\frac{f_{j}(x_{i^{\prime}})}{\|\bar{g}_{j,i^{\prime}}\|}\leq h_{i^{\prime}}$,
which gives $f_{j}(x_{i^{\prime}})\leq\|\bar{g}_{j,i^{\prime}}\|h_{i^{\prime}}\leq M_{2}h_{k^{\prime}}$.
It remains to note that $k^{\prime}\geq\frac{k}{3}-1$, and therefore
$h_{k^{\prime}}\leq\frac{\sqrt{3}R}{\sqrt{k-1.5}}$. This ends the
proof of (a).

We now go on to prove the result if Step 1B had been used throughout
the algorithm. Once again, let $k^{\prime}=\left\lfloor k/3\right\rfloor $
and 
\[
I_{k}=\bigg\{ i\in[k^{\prime},\dots,k]:x_{k+1}=P_{Q}\bigg(x_{k}^{m}-\frac{h_{k}}{\|g_{k}\|}g_{k}\bigg)\bigg\}.
\]
If $i\notin I_{k}$, we still have $\|x_{i+1}-x^{*}\|^{2}\leq\|x_{i}-x^{*}\|^{2}-h_{i}^{2}$.
If $i\in I_{k}$, we have $\|x_{i}^{m}-x^{*}\|\leq\|x_{i}-x^{*}\|$,
and we can use arguments similar to \eqref{eq:chain-in-I-k} to get
\[
\|x_{i+1}-x^{*}\|^{2}\leq\|x_{i}^{m}-x^{*}\|^{2}+h_{i}^{2}-2h_{i}\left\langle \frac{g_{i}}{\|g_{i}\|},x_{i}^{m}-x^{*}\right\rangle .
\]
Define $v_{i}=\langle\frac{g_{i}}{\|g_{i}\|},x_{i}^{m}-x^{*}\rangle$.
By the same reasoning as before, there is some $i^{\prime}\in I_{k}$
such that $v_{i^{\prime}}<h_{i^{\prime}}\leq h_{k^{\prime}}$. By
the reasoning in \eqref{eq:val-ineq}, we have 
\[
f(x_{i^{\prime}}^{m})-f(x^{*})\leq M_{1}h_{k^{\prime}}.
\]
To obtain the other inequality, we note that $d(x_{k}^{j-1},C_{j})\leq\|x_{k}^{j}-x_{k}^{j-1}\|$
for any $j\in\{1,\dots,m\}$. Thus for any $j\in\{1,\dots,m\}$, we
have 
\[
d(x_{i^{\prime}},C_{j})\leq\sum_{l=1}^{m}\|x_{i^{\prime}}^{l}-x_{i^{\prime}}^{l-1}\|\leq\sqrt{m}\sqrt{\sum_{l=1}^{m}\|x_{i^{\prime}}^{l}-x_{i^{\prime}}^{l-1}\|^{2}}<\sqrt{m}h_{i^{\prime}}.
\]
In view of linear metric inequality, we thus have 
\[
d(x_{i^{\prime}},C)\leq\kappa\underset{j\in\{1,\dots,m\}}{\max}d(x_{i^{\prime}},C_{j})\leq\kappa\sqrt{m}h_{i^{\prime}}.
\]
By Fej\'{e}r monotonicity, we have $d(x_{i^{\prime}}^{m},C)\leq\kappa\sqrt{m}h_{i^{\prime}}$.
Like before, $h_{i^{\prime}}\leq h_{k^{\prime}}\leq\frac{\sqrt{3}R}{\sqrt{k-1.5}}$.
Our proof is complete.
\end{proof}

\section{\label{sec:gen-Haugazeau}Convergence rate of generalized Haugazeau's
algorithm }

One method of solving the BAP \eqref{eq:Proj-pblm} is Haugazeau's
algorithm. In this section, we show that a generalized Haugazeau's
algorithm has $O(1/k)$ convergence to the optimal value and $O(1/\sqrt{k})$
convergence to the optimal solution when the linear metric inequality
assumption is satisfied. 
\begin{algorithm}
\label{alg:gen-Haugazeau}(Generalized Haugazeau's algorithm) Let
$f:\mathbb{R}^{n}\to\mathbb{R}$ be in $\mathcal{S}_{\mu,L}^{1,1}$,
where $\mu>0$. For a point $x_{0}$ and several continuous convex
functions $f_{j}:\mathbb{R}^{n}\to\mathbb{R}$, where $j\in\{1,\dots,m\}$,
we want to find the minimizer of $f(\cdot)$ on 
\[
C:=\cap_{j=1}^{m}\{x:f_{j}(x)\leq0\}.
\]
Suppose the linear metric inequality assumption is satisfied. \\
(A choice of $f(\cdot)$ is $f(x):=\frac{1}{2}\|x-x_{0}\|^{2}$, where
$x_{0}$ is some point in $\mathbb{R}^{n}$.) 

01\textbf{ Step 0:} Let $H_{0}^{\circ}=\mathbb{R}^{n}$. 

02 Let $x_{0}$ be the minimizer of $f(\cdot)$ on $\mathbb{R}^{n}$. 

03 For iteration $k=0,1,2,\dots$

04\textbf{ }$\quad$\textbf{Step 1 (Find a halfspace of largest distance
from $x_{k}$):} 

05 $\quad$For $j\in\{1,\dots,m\}$, 

06 $\quad$$\quad$Find $\bar{g}_{j,k}\in\partial f_{j}(x_{k})$ 

07 $\quad$$\quad$Let $H_{j,k}$ be the set 
\[
H_{j,k}:=\begin{cases}
\{x:f_{j}(x_{k})+\langle\bar{g}_{j,k},x-x_{k}\rangle\leq0\} & \mbox{if }f_{j}(x_{k})\geq0\\
\mathbb{R}^{n} & \mbox{ otherwise.}
\end{cases}
\]

08 $\quad$$\quad$Let $\bar{j}\in\{1,\dots,m\}$ be such that $d(x_{k},H_{\bar{j},k})=\max_{j}d(x_{k},H_{j,k})$. 

09 $\quad$$\quad$Let $H_{k}^{+}:=H_{\bar{j},k}$.

10 $\quad$end for

11\textbf{ }$\quad$\textbf{Step 2:} 

12 $\quad$Find the minimizer $x_{k+1}$ of $f(\cdot)$ on $H_{k}^{\circ}\cap H_{k}^{+}$ 

13 $\quad$Let $H_{k+1}^{\circ}=\{x:\langle-f^{\prime}(x_{k+1}),x-x_{k+1}\rangle\leq0\}$. 

14 End for
\end{algorithm}
The halfspace $H_{k+1}^{\circ}$ in Step 2 is designed so that $x_{k+1}$
is the minimizer of $f(\cdot)$ on $H_{k+1}^{\circ}$. Finding the
index $\bar{j}$ such that $d(x_{k},H_{\bar{j},k})=\max_{j}d(x_{k},H_{j,k})$
in Step 1 can be prohibitively expensive if $m$ is large, so the
alternative algorithm below is more reasonable.
\begin{algorithm}
\label{alg:2nd-Haugazeau}(Alternative algorithm) For the same setting
as Algorithm \ref{alg:gen-Haugazeau}, we propose a different algorithm.

01\textbf{ Step 0:} Let $H_{0}^{\circ}$ be $\mathbb{R}^{n}$, and
let $x_{0}$ be the minimizer of $f(\cdot)$ on $\mathbb{R}^{n}$. 

02 Let $k=0$. 

03\textbf{ Step 1:} Set $x_{k}^{0}=x_{k}$ and $H_{0,k}^{\circ}=H_{k}^{\circ}$. 

04 For $j=1,\dots,m$ 

05 $\quad$Find $\bar{g}_{j,k}\in\partial f_{j}(x_{k})$ and set 
\[
H_{j,k}^{+}:=\begin{cases}
\{x:f_{j}(x_{k}^{j-1})+\langle\bar{g}_{j,k},x-x_{k}^{j-1}\rangle\leq0\} & \mbox{ if }f_{j}(x_{k}^{j-1})\geq0\\
\mathbb{R}^{n} & \mbox{ otherwise.}
\end{cases}
\]

06 $\quad$Find the minimizer $x_{k}^{j}$ of $f(\cdot)$ on $H_{j-1,k}^{\circ}\cap H_{j,k}^{+}$. 

07 $\quad$Let $H_{j,k}^{\circ}=\{x:\langle-f^{\prime}(x_{k}^{j}),x-x_{k}^{j}\rangle\leq0\}$.

08 end for 

09\textbf{ Step 2:} Set $x_{k+1}=x_{k}^{m}$

10 Set $k\leftarrow k+1$ and go back to Step 1.\end{algorithm}
\begin{rem}
(Quadratic case in Algorithm \ref{alg:gen-Haugazeau}) We discuss
the particular case when $f(x):=\frac{1}{2}\|x-x_{0}\|^{2}$. In other
words, the optimization problem is the BAP \eqref{eq:Proj-pblm}.
In this case, Algorithm \ref{alg:gen-Haugazeau} reduces to Haugazeau's
algorithm. The problem of minimizing $f(\cdot)$ on the intersection
of two halfspaces is easy enough to solve analytically. Note that
throughout Algorithm \ref{alg:gen-Haugazeau}, the halfspaces $H_{k}^{\circ}$
and $H_{k}^{+}$ contain the set $C$. One can choose to keep more
halfspaces containing $C$ and in Step 2, find the minimizer of $f(\cdot)$
on the intersection of a larger number of halfspaces. The convergence
would be accelerated at the price of solving larger quadratic programs.
One can also apply the strategies in Subsection \ref{sub:dual-active-set-QP}.
\end{rem}
The lemma below is useful in the proof of Theorem \ref{thm:Haugazeau-rate}.
\begin{lem}
\label{lem:conv-rate-seq}(Convergence rate of a sequence) Suppose
$\{\delta_{k}\}_{k}\subset\mathbb{R}$ is a sequence of nonnegative
real numbers satisfying 
\[
\delta_{k+1}\leq\delta_{k}-\epsilon_{1}\left[1-\sqrt{1-\epsilon_{2}\delta_{k}}\right]^{2}+\frac{\alpha}{k^{2}},
\]
where $\epsilon_{1},\epsilon_{2}>0$, $\alpha\geq0$ and $\epsilon_{2}\delta_{1}<1$.
Let $\bar{\epsilon}=\frac{1}{4}\epsilon_{1}\epsilon_{2}^{2}$. 

(a) The convergence of $\{\delta_{k}\}_{k}$ to zero is $O(1/k)$. 

(b) If $\alpha=0$ and $\delta_{k}>0$ for all $k$, then $\{\delta_{k}\}_{k}$
is strictly decreasing, and 
\[
\delta_{k}\leq\frac{1}{\frac{1}{\delta_{0}}+\bar{\epsilon}k}\mbox{ for all }k\geq0,
\]
\end{lem}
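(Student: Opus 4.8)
The starting point is the elementary inequality
\[
1-\sqrt{1-t}\ \ge\ \tfrac{t}{2}\qquad\mbox{for }t\in[0,1],
\]
which is equivalent to $\sqrt{1-t}\le 1-\tfrac t2$ and follows by squaring. (For the hypothesised recursion to be well defined at all one must have $\epsilon_2\delta_k\le 1$, so this applies with $t=\epsilon_2\delta_k$.) Substituting, the hypothesis becomes the cleaner recursion
\[
\delta_{k+1}\ \le\ \delta_k-\bar{\epsilon}\,\delta_k^2+\frac{\alpha}{k^2},
\]
with $\bar{\epsilon}=\tfrac14\epsilon_1\epsilon_2^2$, and this is the form I would work with in both parts.

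For part (b), $\alpha=0$ gives $\delta_{k+1}\le\delta_k-\bar{\epsilon}\delta_k^2=\delta_k(1-\bar{\epsilon}\delta_k)$. Since $\bar{\epsilon}\delta_k^2>0$ the sequence is strictly decreasing, and since $\delta_{k+1}>0$ we must have $1-\bar{\epsilon}\delta_k>0$, i.e.\ $\bar{\epsilon}\delta_k\in(0,1)$ for every $k$. Dividing through and using $\tfrac{1}{1-x}\ge 1+x$ on $[0,1)$,
\[
\frac{1}{\delta_{k+1}}\ \ge\ \frac{1}{\delta_k(1-\bar{\epsilon}\delta_k)}\ \ge\ \frac{1}{\delta_k}+\bar{\epsilon},
\]
and telescoping from $k=0$ yields $\tfrac{1}{\delta_k}\ge\tfrac{1}{\delta_0}+\bar{\epsilon}k$, which is the asserted bound.

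Part (a) is the harder one, because the term $\alpha/k^2$ can make $\{\delta_k\}$ nonmonotone, so one cannot simply iterate a single-step estimate starting from $\delta_1$. The route I would take: first sum the displayed recursion to get $\bar{\epsilon}\sum_k\delta_k^2\le\delta_1+\alpha\sum_k k^{-2}<\infty$, whence $\liminf_k\delta_k=0$. Let $C^{\ast}$ be the positive root of $\bar{\epsilon}C^2-C-\alpha=0$. Using $\liminf_k\delta_k=0$, choose an index $m_0$ large enough that $m_0\ge 2\bar{\epsilon}C^{\ast}$ and $\delta_{m_0}\le\tfrac{1}{2\bar{\epsilon}}$, and set $C:=\tfrac{m_0}{2\bar{\epsilon}}$; then $C\ge C^{\ast}$, $\delta_{m_0}\le C/m_0$, and $C/k\le\tfrac{1}{2\bar{\epsilon}}$ for all $k\ge m_0$. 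Since $g(t):=t-\bar{\epsilon}t^2$ is nondecreasing on $[0,\tfrac{1}{2\bar{\epsilon}}]$, one proves $\delta_k\le C/k$ for all $k\ge m_0$ by induction: the base case is $\delta_{m_0}\le C/m_0$, and for the inductive step, using $0\le\delta_k\le C/k\le\tfrac1{2\bar{\epsilon}}$, the monotonicity of $g$, the relation $\bar{\epsilon}C^2-\alpha\ge C$, and $\tfrac{C}{k+1}\ge\tfrac Ck-\tfrac{C}{k^2}$,
\[
\delta_{k+1}\ \le\ g(\delta_k)+\tfrac{\alpha}{k^2}\ \le\ g\!\left(\tfrac Ck\right)+\tfrac{\alpha}{k^2}\ =\ \tfrac Ck-\tfrac{\bar{\epsilon}C^2-\alpha}{k^2}\ \le\ \tfrac Ck-\tfrac{C}{k^2}\ \le\ \tfrac{C}{k+1}.
\]
Hence $\delta_k=O(1/k)$.

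The delicate point, and the main obstacle, lies entirely in part (a): one has to be willing to discard an initial, uncontrolled stretch of the sequence and restart the analysis from a far-out index $m_0$ where $\delta$ has returned near $0$, and then calibrate $C$ in terms of $m_0$ so that the base case and the requirement that ``$C/k$ stays in the region where $t\mapsto t-\bar{\epsilon}t^2$ is nondecreasing'' hold simultaneously. The summability bound $\sum_k\delta_k^2<\infty$ is precisely what guarantees such an index $m_0$ exists.
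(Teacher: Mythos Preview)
Your argument is correct in both parts. Part~(b) is essentially identical to the paper's proof: both reduce to $\delta_{k+1}\le\delta_k-\bar\epsilon\delta_k^2$, pass to reciprocals via $\tfrac{1}{1-x}\ge 1+x$, and telescope to obtain $\tfrac{1}{\delta_k}\ge\tfrac{1}{\delta_0}+\bar\epsilon k$.

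For part~(a) the two proofs diverge. The paper attempts to run the induction $\delta_k\le\tfrac{1}{rk}$ directly from $k=1$, choosing small parameters $r$ and $\tilde\epsilon\le\bar\epsilon$ so that $\delta_1\le\tfrac{1}{r}$, $\delta_1\le\tfrac{1}{2\tilde\epsilon}$, and $r^2\bar\alpha+r\le\tilde\epsilon$, and then appealing to the monotonicity of $\delta\mapsto\delta-\tilde\epsilon\delta^2$ on $[0,\tfrac{1}{2\tilde\epsilon}]$. You instead take a two-stage route: first sum the recursion to obtain $\bar\epsilon\sum_k\delta_k^2\le\delta_1+\alpha\sum_k k^{-2}<\infty$, which forces $\delta_k\to 0$; then restart the induction from a late index $m_0$ where $\delta_{m_0}\le\tfrac{1}{2\bar\epsilon}$, calibrating $C=\tfrac{m_0}{2\bar\epsilon}$ so that both the base case $\delta_{m_0}\le C/m_0$ and the monotonicity requirement $C/k\le\tfrac{1}{2\bar\epsilon}$ hold simultaneously for all $k\ge m_0$. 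Your approach is more robust: it explicitly separates the uncontrolled early stretch (where the $\alpha/k^2$ term may dominate and $\delta_k$ need not lie in the monotonicity region) from the tail where the clean induction goes through, and the summability bound is exactly what guarantees the existence of a suitable restart index. The paper's version is terser but leaves the compatibility of its parameter constraints with the monotonicity-range requirement less transparent.
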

\begin{proof}
We first prove (a). Suppose the values $r>0$ and $\tilde{\epsilon}>0$
are small enough so that $\delta_{1}\leq\frac{1}{r}$, $\tilde{\epsilon}\leq\bar{\epsilon}$,
$\delta_{1}\leq\frac{1}{2\tilde{\epsilon}}$ and $r^{2}\bar{\alpha}+r\leq\tilde{\epsilon}$.
Suppose $\delta_{k}\leq\frac{1}{rk}$. Then by the monotonicity of
the function $\delta\mapsto\delta-\tilde{\epsilon}\delta^{2}$ in
the range $\delta\in[0,\frac{1}{2\tilde{\epsilon}}]$ , we have 
\begin{eqnarray*}
\delta_{k+1} & \leq & \delta_{k}-\bar{\epsilon}[\delta_{k}]^{2}+\frac{\bar{\alpha}}{k^{2}}\\
 & \leq & \delta_{k}-\tilde{\epsilon}[\delta_{k}]^{2}+\frac{\bar{\alpha}}{k^{2}}\\
 & \leq & \frac{1}{rk}-\tilde{\epsilon}\frac{1}{r^{2}k^{2}}+\frac{\bar{\alpha}}{k^{2}}\\
 & = & \frac{rk-\tilde{\epsilon}+r^{2}\bar{\alpha}}{r^{2}k^{2}}\\
 & \leq & \frac{rk-r}{r^{2}k^{2}}\\
 & \leq & \frac{1}{r(k+1)}.
\end{eqnarray*}
 Thus $\{\delta_{k}\}_{k}\in O(1/k)$ as needed.

We now prove (b). Like in (a), we have $\delta_{k+1}\leq\delta_{k}-\frac{1}{4}\epsilon_{1}\epsilon_{2}^{2}\delta_{k}^{2}=\delta_{k}-\bar{\epsilon}\delta_{k}^{2}$.
It is clear that $\{\delta_{k}\}_{k}$ is a strictly decreasing sequence
if all terms are positive. Let $\theta_{k}=\frac{1}{\bar{\epsilon}\delta_{k}}$
so that $\delta_{k}=\frac{1}{\bar{\epsilon}\theta_{k}}$. Then 
\[
\delta_{k+1}\leq\delta_{k}-\bar{\epsilon}\delta_{k}^{2}=\frac{1}{\bar{\epsilon}\theta_{k}}-\frac{1}{\bar{\epsilon}\theta_{k}^{2}}=\frac{\theta_{k}-1}{\bar{\epsilon}\theta_{k}^{2}}\leq\frac{1}{\bar{\epsilon}(\theta_{k}+1)}=\frac{1}{\frac{1}{\delta_{k}}+\bar{\epsilon}}.
\]
In other words, $\frac{1}{\delta_{k+1}}\geq\frac{1}{\delta_{k}}+\bar{\epsilon}$.
The conclusion is now straightforward.\end{proof}
\begin{lem}
\label{lem:dist-to-supp-halfspace}(Distance to supporting halfspace)
Suppose $f:\mathbb{R}^{n}\to\mathbb{R}$ is a differentiable strongly
convex function with parameter $\mu$. Let $\bar{x},x\in\mathbb{R}^{n}$
be such that $f(x)<f(\bar{x})$ and $f^{\prime}(\bar{x})\neq0$. Define
the halfspace $\bar{H}$ by $\bar{H}:=\{x:\langle f^{\prime}(\bar{x}),x-\bar{x}\rangle\geq0\}$.
Then the following hold:

(a) $d(x,\bar{H})\geq\frac{1}{\mu}\Big[\|f^{\prime}(\bar{x})\|-\sqrt{\|f^{\prime}(\bar{x})\|^{2}-2\mu[f(\bar{x})-f(x)]}\Big].$

(b) If $\langle f^{\prime}(x),\bar{x}-x\rangle\geq0$, then $\|x-\bar{x}\|\leq\sqrt{\frac{2}{\mu}[f(\bar{x})-f(x)]}.$\end{lem}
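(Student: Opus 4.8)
The plan is to use the strong convexity inequality as the single engine for both parts. For part (a), write $g := f'(\bar x) \neq 0$; the halfspace $\bar H = \{y : \langle g, y - \bar x\rangle \geq 0\}$ has $\bar x \in \partial \bar H$, and since $x \notin \bar H$ (we will check this), $d(x,\bar H) = \frac{\langle g, \bar x - x\rangle}{\|g\|}$. Set $t := d(x,\bar H) \geq 0$. The strong convexity inequality applied at the base point $\bar x$ reads
\[
f(x) \geq f(\bar x) + \langle g, x - \bar x\rangle + \frac{\mu}{2}\|x - \bar x\|^2.
\]
Now $\langle g, x - \bar x\rangle = -\|g\|\, t$ and $\|x - \bar x\| \geq t$ (the distance to the boundary hyperplane is at most the distance to the point $\bar x$ on it), so
\[
f(x) \geq f(\bar x) - \|g\| t + \frac{\mu}{2} t^2,
\]
i.e. $\frac{\mu}{2} t^2 - \|g\| t + [f(\bar x) - f(x)] \leq 0$. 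This is a quadratic in $t$ with positive leading coefficient; its roots are $\frac{1}{\mu}\big[\|g\| \pm \sqrt{\|g\|^2 - 2\mu[f(\bar x) - f(x)]}\big]$, and $t$ must lie between them, so in particular $t$ is at least the smaller root, which is exactly the claimed bound. (One should note the discriminant is nonnegative precisely because a real $t$ satisfying the inequality exists — equivalently, because $\|g\|^2 \geq 2\mu[f(\bar x)-f(x)]$ always holds for strongly convex $f$, by minimizing the right side of the strong convexity inequality over $x$.)

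For part (b), the hypothesis $\langle f'(x), \bar x - x\rangle \geq 0$ lets us apply strong convexity at the base point $x$:
\[
f(\bar x) \geq f(x) + \langle f'(x), \bar x - x\rangle + \frac{\mu}{2}\|\bar x - x\|^2 \geq f(x) + \frac{\mu}{2}\|\bar x - x\|^2,
\]
using the sign hypothesis to drop the middle term. Rearranging gives $\|\bar x - x\|^2 \leq \frac{2}{\mu}[f(\bar x) - f(x)]$, which is the claim after taking square roots (the right side is nonnegative since $f(x) < f(\bar x)$).

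The only genuinely delicate points, none of them hard, are: confirming $x \notin \bar H$ so that the distance formula $d(x,\bar H) = \langle g, \bar x - x\rangle/\|g\|$ is valid with a nonnegative numerator — this follows because if $x \in \bar H$ then $\langle g, x-\bar x\rangle \geq 0$ and strong convexity would force $f(x) \geq f(\bar x) + \frac{\mu}{2}\|x-\bar x\|^2 > f(\bar x)$, contradicting $f(x) < f(\bar x)$; and checking the discriminant $\|g\|^2 - 2\mu[f(\bar x)-f(x)]$ is nonnegative so the stated square root is real, which is the standard consequence of strong convexity noted above. Everything else is the elementary fact that a nonnegative number satisfying a convex quadratic inequality lies above the smaller root. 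I expect no real obstacle; the main thing to get right is choosing the correct base point for the strong convexity inequality in each part ($\bar x$ for (a), $x$ for (b)) and tracking the sign of the inner product term.
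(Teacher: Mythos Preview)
Your proof is correct. Part (b) matches the paper verbatim. For part (a), both arguments rest on the same strong-convexity inequality at the base point $\bar x$, but the executions differ. The paper sets up the auxiliary optimization problem
\[
\min_{y}\ \Big\langle \tfrac{-f'(\bar x)}{\|f'(\bar x)\|},\, y-\bar x\Big\rangle
\quad\text{s.t.}\quad f(\bar x)+\langle f'(\bar x),y-\bar x\rangle+\tfrac{\mu}{2}\|y-\bar x\|^{2}\le f(x),
\]
recognizes the feasible set as a ball containing $x$, and solves the linear-over-ball problem analytically along the ray $y=\bar x+tf'(\bar x)$. You bypass that machinery: from the same strong-convexity inequality you substitute $\langle g,x-\bar x\rangle=-\|g\|t$ and use the elementary bound $\|x-\bar x\|\ge t$ (since $\bar x\in\partial\bar H$) to land directly on the scalar quadratic $\tfrac{\mu}{2}t^{2}-\|g\|t+[f(\bar x)-f(x)]\le 0$, whose smaller root is the claimed bound. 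Your route is shorter and avoids the geometric detour; the paper's route makes explicit that the bound is actually the minimum signed distance over the entire sublevel-ball, which is marginally more information but not needed for the lemma as stated. Your handling of the side conditions (why $x\notin\bar H$, why the discriminant is nonnegative) is also cleaner than what the paper spells out.
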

\begin{proof}
We first prove (a). We look to solve 
\begin{eqnarray*}
 & \min_{y} & \left\langle \frac{-f^{\prime}(\bar{x})}{\|f^{\prime}(\bar{x})\|},y-\bar{x}\right\rangle \\
 & \mbox{s.t.} & f(\bar{x})+\langle f^{\prime}(\bar{x}),y-\bar{x}\rangle+\frac{\mu}{2}\|y-\bar{x}\|^{2}\leq f(x)
\end{eqnarray*}
For any $y\in\mathbb{R}^{n}$, a lower bound on $f(y)$ is $f(\bar{x})+\langle f^{\prime}(\bar{x}),y-\bar{x}\rangle+\frac{\mu}{2}\|y-\bar{x}\|^{2}$
by strong convexity. Thus if $y$ is such that $\ensuremath{f(y)=f(x)}$,
it must satisfy the constraint of the above problem. The objective
value is $d(y,\bar{H})$. So this optimization problem finds a lower
bound to the distance to the halfspace $\bar{H}$ provided that the
objective value is at most $f(x)$. 

We rewrite the constraint to get 
\begin{eqnarray*}
f(\bar{x})+\langle f^{\prime}(\bar{x}),y-\bar{x}\rangle+\frac{\mu}{2}\|y-\bar{x}\|^{2} & \leq & f(x)\\
\frac{1}{2}\mu\bigg\| y-\bar{x}+\frac{1}{\mu}f^{\prime}(\bar{x})\bigg\|^{2} & \leq & f(x)-f(\bar{x})+\frac{1}{2\mu}\|f^{\prime}(\bar{x})\|^{2}.
\end{eqnarray*}
The feasible set of the optimization problem is thus a ball with center
$\bar{x}-\frac{1}{\mu}f^{\prime}(\bar{x})$. The optimization problem
can be solved analytically by finding the $t$ with smallest absolute
value such that $x=\bar{x}+tf^{\prime}(\bar{x})$ lies on the boundary
of the ball. In other words, 
\begin{eqnarray*}
f(\bar{x})+\langle f^{\prime}(\bar{x}),[\bar{x}+tf^{\prime}(\bar{x})]-\bar{x}\rangle+\frac{\mu}{2}\|[\bar{x}+tf^{\prime}(\bar{x})]-\bar{x}\|^{2} & = & f(x)\\
\frac{1}{2}t^{2}\mu\|f^{\prime}(\bar{x})\|^{2}+t\|f^{\prime}(\bar{x})\|^{2}+f(\bar{x})-f(x) & = & 0.
\end{eqnarray*}
So

\begin{eqnarray*}
t & = & \frac{-\|f^{\prime}(\bar{x})\|^{2}+\|f^{\prime}(\bar{x})\|\sqrt{\|f^{\prime}(\bar{x})\|^{2}-2\mu[f(\bar{x})-f(x)]}}{\mu\|f^{\prime}(\bar{x})\|^{2}}\\
 & = & -\frac{1}{\mu}+\frac{\sqrt{\|f^{\prime}(\bar{x})\|^{2}-2\mu[f(\bar{x})-f(x)]}}{\mu\|f^{\prime}(\bar{x})\|}.
\end{eqnarray*}
The distance of $x$ to $\bar{H}$ is thus at least $\frac{1}{\mu}\big[\|f^{\prime}(\bar{x})\|-\sqrt{\|f^{\prime}(\bar{x})\|^{2}-2\mu[f(\bar{x})-f(x)]}\big]$
as needed, which concludes the proof of (a).

Next, we prove (b). By strong convexity and the given assumption,
we have 
\[
f(\bar{x})\geq f(x)+\langle f^{\prime}(x),\bar{x}-x\rangle+\frac{\mu}{2}\|x-\bar{x}\|^{2}\geq f(x)+\frac{\mu}{2}\|x-\bar{x}\|^{2}.
\]
A rearrangement of the above gives the conclusion we need.
\end{proof}
Before we prove Theorem \ref{thm:Haugazeau-rate}, we need the following
definition.
\begin{defn}
(Triangular property) Consider the function $f_{j}:\mathbb{R}^{n}\to\mathbb{R}$
in Algorithm \ref{alg:2nd-Haugazeau} for some $j\in\{1,\dots,m\}$.
We say that $f_{j}:\mathbb{R}^{n}\to\mathbb{R}$ has the \emph{triangular
property }if for and $y,z\in\mathbb{R}^{n}$ and any $g_{y}\in\partial f_{j}(y)$
and $g_{z}\in\partial f_{j}(z)$, we have 
\begin{equation}
d(y,H_{y})\leq\|y-z\|+d(z,H_{z}),\label{eq:distance-ppty}
\end{equation}
where 
\[
H_{y}:=\begin{cases}
\{x:f_{j}(y)+\langle g_{y},x-y\rangle\leq0\} & \mbox{ if }f_{j}(y)>0\\
\mathbb{R}^{n} & \mbox{ if }f_{j}(y)\leq0,
\end{cases}
\]
and $H_{z}$ is defined similarly. 
\end{defn}
If $f_{j}:\mathbb{R}^{n}\to\mathbb{R}$ is defined by $f_{j}(x)=d(x,C_{j})$
for a closed convex set $C_{j}\subset\mathbb{R}^{n}$, then $g_{y}=\frac{y-P_{C_{j}}(y)}{d(y,C_{j})}$,
$g_{z}=\frac{z-P_{C_{j}}(z)}{d(z,C_{j})}$, $d(y,H_{y})=d(y,C_{j})$
and $d(z,H_{z})=d(z,C_{j})$, so \eqref{eq:distance-ppty} obviously
holds. However, the triangular property need not hold for any convex
function.
\begin{example}
(Failure of triangular property) Let $f_{j}:\mathbb{R}\to\mathbb{R}$
be defined by $f_{j}(x)=\max\{x,2x-1\}$. If $y=0.9$ and $z=1.1$,
we can check that $d(y,H_{y})=0.9$, $d(z,H_{z})=0.6$ and $\|y-z\|=0.2$,
which means that \eqref{eq:distance-ppty} cannot hold.
\end{example}
We now prove the convergence of Algorithms \ref{alg:gen-Haugazeau}
and \ref{alg:2nd-Haugazeau}. 
\begin{thm}
\label{thm:Haugazeau-rate}(Convergence rate of Algorithm \ref{alg:gen-Haugazeau})
Consider the setting in Algorithm \ref{alg:gen-Haugazeau}. Suppose
the linear metric inequality is satisfied. Let $x^{*}$ be the optimal
solution to $\min\{f(x):x\in C\}$, and assume that $f^{\prime}(x^{*})\neq0$. 

(a) In Algorithm \ref{alg:gen-Haugazeau}, the convergence of $\{f(x_{k})\}_{k=1}^{\infty}$
to $f(x^{*})$ satisfies 
\[
f(x^{*})-f(x_{k})\leq\frac{1}{\frac{1}{f(x^{*})-f(x_{0})}+\bar{\epsilon}k},
\]
where $\bar{\epsilon}=\frac{\mu}{2\kappa^{2}\|f^{\prime}(x^{*})\|^{3}}$,
and the convergence of $\{x_{k}\}_{k=1}^{\infty}$ to $x^{*}$ satisfies
\begin{equation}
\|x^{*}-x_{k}\|\leq\sqrt{\frac{2}{\mu}[f(x^{*})-f(x_{k})]}.\label{eq:conv-to-optimizer}
\end{equation}
Thus the convergence of $\{f(x_{k})\}_{k=1}^{\infty}$ to $f(x^{*})$
is $O(1/k)$, and the convergence of $\{x_{k}\}_{k=1}^{\infty}$ to
$x^{*}$ is $O(1/\sqrt{k})$. 

(b) Suppose in addition that the triangular property holds. In Algorithm
\ref{alg:2nd-Haugazeau}, the convergence of $\{f(x_{k})\}_{k=1}^{\infty}$
to $f(x^{*})$ satisfies 
\[
f(x^{*})-f(x_{k})\leq\frac{1}{\frac{1}{f(x^{*})-f(x_{0})}+\frac{\bar{\epsilon}}{m}k},
\]
where $\bar{\epsilon}$ is the same as in (a), and the convergence
of $\{x_{k}\}_{k=1}^{\infty}$ to $x^{*}$ satisfies \eqref{eq:conv-to-optimizer}.\end{thm}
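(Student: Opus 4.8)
The strategy is to reduce both claims to the scalar recursion of Lemma~\ref{lem:conv-rate-seq}, applied to $\delta_k:=f(x^*)-f(x_k)$. I would first record two structural facts. First, $C\subseteq H_k^\circ$ for every $k$: this holds for $H_0^\circ=\mathbb{R}^n$, and since every halfspace $H_{j,k}$ (resp.\ $H_{j,k}^+$) produced by the algorithms contains the sublevel set $C_j\supseteq C$, each point produced at a (inner or outer) step minimizes $f$ over a convex set containing $C$, so the first order optimality condition at that point shows the associated halfspace $H^\circ$ contains $C$. Second, by the design of $H_k^\circ$ the iterate $x_k$ is the minimizer of $f$ over $H_k^\circ$; since $x^*\in C\subseteq H_k^\circ$ this yields $f(x_k)\le f(x^*)$ (so $\delta_k\ge0$), since $x_{k+1}\in H_k^\circ$ it yields $f(x_{k+1})\ge f(x_k)$ (so $\{\delta_k\}$ is nonincreasing), and since $x^*\in H_k^\circ$ it yields $\langle f'(x_k),x^*-x_k\rangle\ge0$, whence Lemma~\ref{lem:dist-to-supp-halfspace}(b) with $\bar x=x^*,x=x_k$ gives $\|x^*-x_k\|\le\sqrt{\tfrac2\mu\delta_k}$, i.e.\ \eqref{eq:conv-to-optimizer}. (If $\delta_k=0$ for some $k$ then $x_k=x^*$ and there is nothing to prove, so assume $\delta_k>0$ throughout.) It remains to control $\delta_k$.

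For Algorithm~\ref{alg:gen-Haugazeau} I would chain three estimates. (i) Strong convexity and $\langle f'(x_k),x_{k+1}-x_k\rangle\ge0$ give $f(x_{k+1})-f(x_k)\ge\tfrac\mu2\|x_{k+1}-x_k\|^2$. (ii) $x_{k+1}\in H_k^+$ forces $\|x_{k+1}-x_k\|\ge d(x_k,H_k^+)=\max_{1\le j\le m}d(x_k,H_{j,k})\ge\kappa^{-1}d(x_k,C)$ by \eqref{eq:gen-lin-metric-ineq}. (iii) As $x^*$ minimizes $f$ over $C$, the halfspace $H^*:=\{y:\langle f'(x^*),y-x^*\rangle\ge0\}$ contains $C$, so $d(x_k,C)\ge d(x_k,H^*)$, and Lemma~\ref{lem:dist-to-supp-halfspace}(a) with $\bar x=x^*,x=x_k$ (valid since $f'(x^*)\neq0$ and $f(x_k)<f(x^*)$) gives $d(x_k,C)\ge\frac{\|f'(x^*)\|}{\mu}\big[1-\sqrt{1-\tfrac{2\mu}{\|f'(x^*)\|^2}\delta_k}\big]$. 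Combining,
\[
\delta_k-\delta_{k+1}\ \ge\ \frac{\|f'(x^*)\|^2}{2\kappa^2\mu}\Big[1-\sqrt{1-\tfrac{2\mu}{\|f'(x^*)\|^2}\delta_k}\Big]^2,
\]
which is the hypothesis of Lemma~\ref{lem:conv-rate-seq} with $\epsilon_1=\frac{\|f'(x^*)\|^2}{2\kappa^2\mu}$, $\epsilon_2=\frac{2\mu}{\|f'(x^*)\|^2}$, $\alpha=0$ (the condition $\epsilon_2\delta_k\le1$ being automatic from the inequality $2\mu[f(x^*)-f(x_k)]\le\|f'(x^*)\|^2$ underlying Lemma~\ref{lem:dist-to-supp-halfspace}). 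Lemma~\ref{lem:conv-rate-seq}(b), with $\bar\epsilon=\tfrac14\epsilon_1\epsilon_2^2$, then gives the displayed bound on $f(x^*)-f(x_k)$ and its $O(1/k)$ rate.

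For Algorithm~\ref{alg:2nd-Haugazeau} the skeleton is identical, but the per-iteration decrease must be assembled from the inner loop: telescoping, $\delta_k-\delta_{k+1}=\sum_{j=1}^m[f(x_k^j)-f(x_k^{j-1})]$, and exactly as in (i)--(ii) each summand is at least $\tfrac\mu2\|x_k^j-x_k^{j-1}\|^2\ge\tfrac\mu2\,d(x_k^{j-1},H_{j,k}^+)^2$. The new feature is that $H_{j,k}^+$ is a supporting halfspace of $f_j$ at the running iterate $x_k^{j-1}$ rather than at $x_k$, and this is where the triangular property \eqref{eq:distance-ppty} is used: with $\widetilde H_{j,k}$ the supporting halfspace of $f_j$ at $x_k$, $d(x_k^{j-1},H_{j,k}^+)\ge d(x_k,\widetilde H_{j,k})-\|x_k-x_k^{j-1}\|$. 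Picking $\bar j$ with $d(x_k,\widetilde H_{\bar j,k})=\max_j d(x_k,\widetilde H_{j,k})\ge\kappa^{-1}d(x_k,C)$ and writing $b:=\|x_k-x_k^{\bar j-1}\|$, I would combine the $\bar j$-th summand, bounded below by $\tfrac\mu2(\kappa^{-1}d(x_k,C)-b)_+^2$, with the first $\bar j-1$ summands, bounded below by $\tfrac\mu2\sum_{l<\bar j}\|x_k^l-x_k^{l-1}\|^2\ge\tfrac{\mu}{2(\bar j-1)}b^2\ge\tfrac{\mu}{2m}b^2$; minimizing the sum of these two lower bounds over $b\ge0$ gives $\delta_k-\delta_{k+1}\ge\frac{\mu}{2m\kappa^2}d(x_k,C)^2$. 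Feeding this into (iii) produces the recursion of Lemma~\ref{lem:conv-rate-seq} with $\epsilon_1$ replaced by $\epsilon_1/m$, hence the extra factor $1/m$ in the rate; \eqref{eq:conv-to-optimizer} holds verbatim.

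I expect the bookkeeping in part (b) to be the only genuinely delicate step: the triangular property controls $d(x_k^{j-1},H_{j,k}^+)$ in terms of $d(x_k,\widetilde H_{j,k})$ only up to the a priori uncontrolled displacement $\|x_k-x_k^{j-1}\|$ of the inner iterates, so one has to trade this displacement off against the decrease of $f$ it already certifies on the earlier inner steps, and choose the case split so that the resulting constant is exactly $\bar\epsilon/m$. Everything else is routine: the degenerate possibilities ($\delta_k=0$, or $x_k\in C$) make the estimates trivial, and $f'(x^*)\neq0$ is assumed.
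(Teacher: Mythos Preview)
Your proof of part (a) is correct and matches the paper's argument essentially line for line: the same three ingredients (strong convexity plus $\langle f'(x_k),x_{k+1}-x_k\rangle\ge0$; linear metric inequality; Lemma~\ref{lem:dist-to-supp-halfspace}(a)) feed into Lemma~\ref{lem:conv-rate-seq}(b), and \eqref{eq:conv-to-optimizer} comes from Lemma~\ref{lem:dist-to-supp-halfspace}(b) exactly as you say.

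For part (b) your strategy is also the paper's, but the bookkeeping at the combination step differs slightly and, as written, loses the exact constant. You bound the $\bar{j}$-th inner increment below by $\tfrac{\mu}{2}(a-b)_+^2$ (with $a=\kappa^{-1}d(x_k,C)$, $b=\|x_k-x_k^{\bar j-1}\|$), the first $\bar j-1$ increments by $\tfrac{\mu}{2(\bar j-1)}b^2\ge\tfrac{\mu}{2m}b^2$, and then minimize over $b$. If you minimize \emph{after} the weakening $\bar j-1\to m$, the infimum of $(a-b)^2+\tfrac{b^2}{m}$ is $\tfrac{a^2}{m+1}$, not $\tfrac{a^2}{m}$, so you land on $\bar\epsilon/(m+1)$ rather than $\bar\epsilon/m$. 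To recover the stated constant you should minimize first (over $b$, with $\bar j$ fixed), obtaining $\tfrac{a^2}{\bar j}\ge\tfrac{a^2}{m}$. The paper achieves the same thing more directly: from the triangular property it gets $\sum_{l=1}^{\bar j}\|x_k^l-x_k^{l-1}\|\ge d(x_k^{\bar j-1},H_{\bar j,k}^+)+\|x_k^{\bar j-1}-x_k^0\|\ge\bar d$, and then applies Cauchy--Schwarz to all $\bar j$ terms at once, yielding $\sum_{l=1}^{\bar j}\|x_k^l-x_k^{l-1}\|^2\ge\bar d^2/\bar j\ge\bar d^2/m$. This avoids the optimization over $b$ entirely. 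Either route gives the $O(1/k)$ rate; only the order of operations matters for the precise constant.
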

\begin{proof}
We first prove part (a). Consider the halfspace 
\[
H^{*}:=\{x:\langle-f^{\prime}(x^{*}),x-x^{*}\rangle\leq0\}.
\]
The halfspace $H^{*}$ contains $C$, and contains $x^{*}$ on its
boundary. It is clear that $\{f(x_{k})\}_{k=1}^{\infty}$ is an increasing
sequence such that $\lim_{k\to\infty}f(x_{k})=f(x^{*})$. 

By Lemma \ref{lem:dist-to-supp-halfspace}(a), we have 
\begin{equation}
d(x_{k},H^{*})\geq\frac{1}{\mu}\Big[\|f^{\prime}(x^{*})\|-\sqrt{\|f^{\prime}(x^{*})\|^{2}-2\mu[f(x^{*})-f(x_{k})]}\Big].\label{eq:dist-min-halfspace}
\end{equation}
By linear metric inequality, we can find a separating halfspace from
$x_{k}$ to $C$ that is of distance $\frac{1}{\kappa\mu}[\|f^{\prime}(x^{*})\|-\sqrt{\|f^{\prime}(x^{*})\|^{2}-2\mu[f(x^{*})-f(x_{k})]}]$
from $x_{k}$. Thus 
\[
\|x_{k+1}-x_{k}\|\geq\frac{1}{\kappa\mu}\left[\|f^{\prime}(x^{*})\|-\sqrt{\|f^{\prime}(x^{*})\|^{2}-2\mu[f(x^{*})-f(x_{k})]}\right].
\]
The next iterate $x_{k+1}$ lies in the set $H_{k}^{\circ}\cap H_{k}^{+}$,
so $\langle f^{\prime}(x_{k}),x_{k+1}-x_{k}\rangle\geq0$. By the
$\mu$-strong convexity of $f$, we have 
\begin{eqnarray}
f(x_{k+1}) & \geq & f(x_{k})+\langle f^{\prime}(x_{k}),x_{k+1}-x_{k}\rangle+\frac{\mu}{2}\|x_{k+1}-x_{k}\|^{2}\label{eq:x-k-k-plus-1-chain}\\
 & \geq & f(x_{k})+\frac{\mu}{2}\|x_{k+1}-x_{k}\|^{2}\nonumber \\
\Rightarrow f(x_{k+1})-f(x_{k}) & \geq & \frac{1}{2\kappa^{2}\mu}\Big[\|f^{\prime}(x^{*})\|-\sqrt{\|f^{\prime}(x^{*})\|^{2}-2\mu[f(x^{*})-f(x_{k})]}\Big]^{2}.\nonumber 
\end{eqnarray}
Let $\delta_{k}=f(x^{*})-f(x_{k})$. From the above, we have 
\[
\delta_{k+1}\leq\delta_{k}-\epsilon_{1}\left[1-\sqrt{1-\epsilon_{2}\delta_{k}}\right]^{2},
\]
where $\epsilon_{1}=\frac{\|f^{\prime}(x^{*})\|}{2\kappa^{2}\mu}$
and $\epsilon_{2}=\frac{2\mu}{\|f^{\prime}(x^{*})\|^{2}}$. Applying
Lemma \ref{lem:conv-rate-seq}(b) gives the first part of our result.
Next, note that $x^{*}$ lies in the halfspace through $x_{k}$ with
outward normal $-f^{\prime}(x_{k})$, so this gives $\langle f^{\prime}(x_{k}),x^{*}-x_{k}\rangle\geq0$.
We use Lemma \ref{lem:dist-to-supp-halfspace}(b) to get \eqref{eq:conv-to-optimizer}.

We now prove part (b). Like before, Lemma \ref{lem:dist-to-supp-halfspace}(a)
applies to give \eqref{eq:dist-min-halfspace}. By linear metric inequality
with parameter $\kappa$, there is an index $\bar{j}\in\{1,\dots,m\}$
such that for any $s\in\partial f_{\bar{j}}(x_{k})$, the halfspace
$H_{\bar{j}}:=\{x:f_{\bar{j}}(x_{k})+\langle s,x-x_{k}\rangle\leq0\}$
is such that $d(x_{k},H_{\bar{j}})\geq\bar{d}$, where 
\[
\bar{d}:=\frac{1}{\kappa\mu}\Big[\|f^{\prime}(x^{*})\|-\sqrt{\|f^{\prime}(x^{*})\|^{2}-2\mu[f(x^{*})-f(x_{k})]}\Big].
\]
(Note the difference between $H_{\bar{j}}$ and $H_{j,k}^{+}$.)

Since $x_{k}^{j-1}$ minimizes $f(\cdot)$ on $H_{j,k}^{\circ}$ and
$x_{k}^{j}\in H_{j,k}^{\circ}$, we have $\langle f^{\prime}(x_{k}^{j-1}),x_{k}^{j}-x_{k}^{j-1}\rangle\geq0$.
Therefore, just like in \eqref{eq:x-k-k-plus-1-chain}, we have 
\[
f(x_{k}^{j})-f(x_{k}^{j-1})\geq\langle f^{\prime}(x_{k}^{j-1}),x_{k}^{j}-x_{k}^{j-1}\rangle+\frac{\mu}{2}\|x_{k}^{j}-x_{k}^{j-1}\|^{2}\geq\frac{\mu}{2}\|x_{k}^{j}-x_{k}^{j-1}\|^{2}.
\]
The triangular property implies that $d(x_{k}^{\bar{j}-1},H_{k,\bar{j}}^{+})+\|x_{k}^{\bar{j}-1}-x_{k}^{0}\|\geq d(x_{k}^{0},H_{\bar{j}})\geq\bar{d}$.
Therefore, 
\begin{eqnarray*}
\sum_{j=1}^{\bar{j}}\|x_{k}^{j}-x_{k}^{j-1}\| & \geq & \|x_{k}^{\bar{j}}-x_{k}^{\bar{j}-1}\|+\|x_{k}^{\bar{j}-1}-x_{k}^{0}\|\\
 & = & d(x_{k}^{\bar{j}-1},H_{k,\bar{j}}^{+})+\|x_{k}^{\bar{j}-1}-x_{k}^{0}\|\\
 & \geq & \bar{d}.
\end{eqnarray*}
Then 
\begin{eqnarray*}
f(x_{k}^{\bar{j}})-f(x_{k}^{0}) & \geq & \frac{\mu}{2}\sum_{j=1}^{\bar{j}}\|x_{k}^{j}-x_{k}^{j-1}\|^{2}\\
 & \geq & \frac{\mu}{2\bar{j}}\Bigg[\sum_{j=1}^{\bar{j}}\|x_{k}^{j}-x_{k}^{j-1}\|\Bigg]^{2}\geq\frac{\mu}{2\bar{j}}\bar{d}^{2}\geq\frac{\mu}{2m}\bar{d}^{2}.
\end{eqnarray*}
Let $\delta_{k}:=f(x^{*})-f(x_{k})$. We have $f(x_{k+1})-f(x_{k})\geq f(x_{k}^{\bar{j}})-f(x_{k}^{0})\geq\frac{\mu}{2m}\bar{d}^{2}$,
so 
\begin{eqnarray*}
\delta_{k+1} & \leq & \delta_{k}-\frac{\mu}{2m}\bar{d}^{2}\\
 & = & \delta_{k}-\frac{1}{2\kappa^{2}\mu m}\Big[\|f^{\prime}(x^{*})\|-\sqrt{\|f^{\prime}(x^{*})\|^{2}-2\mu[f(x^{*})-f(x_{k})]}\Big]^{2}\\
 & \leq & \delta_{k}-\frac{\|f^{\prime}(x^{*})\|}{2\kappa^{2}\mu m}\Bigg[1-\sqrt{1-\frac{2\mu}{\|f^{\prime}(x^{*})\|}\delta_{k}}\,\Bigg]^{2}.
\end{eqnarray*}
Applying Lemma \ref{lem:conv-rate-seq}(b) gives us the result we
need. Lemma \ref{lem:dist-to-supp-halfspace}(b) still applies to
give \eqref{eq:conv-to-optimizer}.
\end{proof}
One would expect Algorithm \ref{alg:2nd-Haugazeau} to be better than
Algorithm \ref{alg:gen-Haugazeau} and converge faster than the conservative
estimate of the convergence rate in Theorem \ref{thm:Haugazeau-rate}.

\section{\label{sec:gen-constrained-opt}Constrained optimization with strongly
convex objective}

Consider the strategy of using Algorithm \ref{alg:gen-Haugazeau}
to solve \eqref{eq:main-pblm}, where $f\in\mathcal{S}_{\mu,L}^{1,1}$,
and $\{f_{j}(\cdot)\}_{j=1}^{m}$ satisfies the linear metric inequality.
A difficulty of Algorithm \ref{alg:gen-Haugazeau} is in Steps 0 and
2, where one has to minimize $f(\cdot)$ over the intersection of
two halfspaces. A natural question to ask is whether an approximate
minimizer would suffice, and how much effort is needed to calculate
this approximate solution. In this section, we show how to get around
this difficulty by using steepest descent steps to find an approximate
minimizer of $f(\cdot)$ on the intersection of two halfspaces, leading
to an algorithm that has a convergence rate comparable to Algorithm
\ref{alg:gen-Haugazeau}.

We first recall the constrained steepest descent of functions in $\mathcal{S}_{\mu,L}^{1,1}$
constrained over a simple set and recall its convergence properties
to the minimizer.
\begin{algorithm}
\label{alg:constrained-steepest-descent}(Constrained gradient algorithm)
Consider $f:\mathbb{R}^{n}\to\mathbb{R}$ in $\mathcal{S}_{\mu,L}^{1,1}$
and a closed convex set $Q\subset\mathbb{R}^{n}$. Choose $x_{0}\in Q$.
The constrained gradient algorithm to solve 
\[
\min\{f(x):x\in Q\}
\]
runs as follows:

At iteration $k$ (where $k\geq0$), $x_{k+1}=x_{k}-hP_{Q}(x_{k}-\frac{1}{L}f^{\prime}(x_{k}))$.
\end{algorithm}
Associated with the steepest descent algorithm is the following result.
See for example \cite[Theorem 2.2.8]{Nesterov_book}.
\begin{thm}
\label{thm:convergence-steepest-descent}(Linear convergence to optimizer
of gradient algorithm) Consider Algorithm \ref{alg:constrained-steepest-descent}.
Let $x^{*}$ be the minimizer. If $h=1/L$, then 
\[
\|x_{k+1}-x^{*}\|^{2}\leq\left(1-\frac{\mu}{L}\right)\|x_{k}-x^{*}\|^{2}.
\]

\end{thm}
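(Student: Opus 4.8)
The plan is to run the classical gradient-mapping argument specialized to the projected step with step length $1/L$. First I would write the iteration in the form $x_{k+1}=P_{Q}(x_{k}-\frac{1}{L}f'(x_{k}))$ and introduce the \emph{gradient mapping} $g_{k}:=L(x_{k}-x_{k+1})$, so that $x_{k+1}=x_{k}-\frac{1}{L}g_{k}$. Expanding the square directly gives
\[
\|x_{k+1}-x^{*}\|^{2}=\|x_{k}-x^{*}\|^{2}-\frac{2}{L}\langle g_{k},x_{k}-x^{*}\rangle+\frac{1}{L^{2}}\|g_{k}\|^{2},
\]
so the whole statement reduces to a suitable lower bound on $\langle g_{k},x_{k}-x^{*}\rangle$.

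To produce that bound I would chain three standard ingredients: (i) the descent lemma coming from $L$-Lipschitzness of $f'$, namely $f(x_{k+1})\leq f(x_{k})+\langle f'(x_{k}),x_{k+1}-x_{k}\rangle+\frac{L}{2}\|x_{k+1}-x_{k}\|^{2}$; (ii) the variational characterization of the projection, $\langle x_{k}-\frac{1}{L}f'(x_{k})-x_{k+1},\,y-x_{k+1}\rangle\leq0$ for all $y\in Q$, equivalently $\langle f'(x_{k}),y-x_{k+1}\rangle\geq\langle g_{k},y-x_{k+1}\rangle$; and (iii) the $\mu$-strong convexity inequality $f(y)\geq f(x_{k})+\langle f'(x_{k}),y-x_{k}\rangle+\frac{\mu}{2}\|y-x_{k}\|^{2}$. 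Taking $y=x^{*}$ and combining (i)--(iii) produces the gradient-mapping inequality
\[
f(x_{k+1})\leq f(x^{*})+\langle g_{k},x_{k}-x^{*}\rangle-\frac{1}{2L}\|g_{k}\|^{2}-\frac{\mu}{2}\|x_{k}-x^{*}\|^{2},
\]
and since $x_{k+1}\in Q$ while $x^{*}$ minimizes $f$ over $Q$, we have $f(x_{k+1})\geq f(x^{*})$, hence
\[
\langle g_{k},x_{k}-x^{*}\rangle\geq\frac{1}{2L}\|g_{k}\|^{2}+\frac{\mu}{2}\|x_{k}-x^{*}\|^{2}.
\]

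Substituting this last inequality into the first display makes the two $\frac{1}{L^{2}}\|g_{k}\|^{2}$ contributions cancel, leaving $\|x_{k+1}-x^{*}\|^{2}\leq(1-\frac{\mu}{L})\|x_{k}-x^{*}\|^{2}$, which is the claim. The only genuine content is the displayed gradient-mapping inequality, and the main obstacle is simply assembling ingredients (i)--(iii) correctly; since this is exactly the material developed around \cite[Theorem 2.2.8]{Nesterov_book}, in the actual write-up I would cite it and do nothing beyond the one-line algebraic substitution. Note that no linear metric inequality is used, and no structure on $Q$ beyond closedness and convexity (needed only so that $P_{Q}$ is well defined and nonexpansive) enters the argument, which is why the result is quoted here as a known black box.
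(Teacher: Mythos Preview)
Your proposal is correct and matches the paper's treatment exactly: the paper does not give its own proof of this theorem but simply cites \cite[Theorem 2.2.8]{Nesterov_book}, and the gradient-mapping argument you sketch is precisely the proof found there. Your closing remark that you would ``cite it and do nothing beyond the one-line algebraic substitution'' is literally what the paper does.
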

Actually, the optimal algorithm of \cite{Nesterov_book} gives a better
ratio of $(1-\sqrt{\frac{u}{L}})$ in place of $(1-\frac{u}{L})$,
but the ratio $(1-\frac{u}{L})$ is sufficient for our purposes. In
problems whose main difficulty is in handling a large number of constraints
rather than the dimension of the problem, algorithms which converge
faster than first order algorithms can be used instead. A different
choice of algorithm would however not affect our subsequent analysis. 

We now state our algorithm.
\begin{algorithm}
\label{alg:general-obj-optim}(Constrained optimization with objective
in $\mathcal{S}_{\mu,L}^{1,1}$) Consider $f:\mathbb{R}^{n}\to\mathbb{R}$
in $\mathcal{S}_{\mu,L}^{1,1}$, and let $f_{j}:\mathbb{R}^{n}\to\mathbb{R}$,
where $j\in\{1,\dots,m\}$, be linearly regular convex functions.

\textbf{01 Separating halfspace procedure:}

02 For a point $x\in\mathbb{R}^{n}$, a separating halfspace $H^{+}$
is found as follows:

03 $\quad$For $j\in\{1,\dots,m\}$, 

04 $\quad$$\quad$Find some $\bar{g}_{j}\in\partial f_{j}(x)$

05 $\quad$$\quad$Let 
\[
H_{j}:=\begin{cases}
\{x^{\prime}:f_{j}(x)+\langle g_{j},x^{\prime}-x\rangle\leq0\} & \mbox{ if }f_{j}(x)\geq0\\
\mathbb{R}^{n} & \mbox{ otherwise.}
\end{cases}
\]

06 $\quad$end for

07 $\quad$Let $H^{+}=H_{\bar{j}}$, where $\bar{j}=\arg\max_{1\leq j\leq m}d(x,H_{j})$.

$\quad$

\textbf{01 Main Algorithm:}

02 Let $H_{1}^{\circ}=\mathbb{R}^{n}$ and $H_{1}^{+}=\mathbb{R}^{n}$
and let $x_{1}^{\circ}$ be a starting iterate. Let $\alpha>0$.

03 For $k=1,\dots$

04 $\quad$Let $x_{k}^{*}$ be the minimizer of $f(\cdot)$ on $H_{k}^{\circ}\cap H_{k}^{+}$

05 $\quad$Starting from $x_{k}^{\circ}$, perform constrained gradient
iterations (Algorithm \ref{alg:constrained-steepest-descent})

06 $\quad$$\quad$for solving $\min\{f(x):x\in H_{k}^{\circ}\cap H_{k}^{+}\}$
to find $x_{k}$ such that 

07 $\quad$$\quad$(1) $\|x_{k}-x_{k}^{*}\|\leq\frac{\alpha}{k^{2}}$,
and

08 $\quad$$\quad$(2) $d(x_{k},H_{k+1}^{+})\geq2\|x_{k}-x_{k}^{*}\|$,
where $H_{k+1}^{+}$ is a halfspace obtained

09 $\quad$$\quad$$\phantom{\mathbf{(2)}}$ from the separating halfspace
procedure with input $x_{k}$.

10 $\quad$If $H_{k}^{\circ}\neq\mathbb{R}^{n}$ and $H_{k}^{+}\neq\mathbb{R}^{n}$
(i.e., both $H_{k}^{\circ}$ and $H_{k}^{+}$ are proper halfspaces)

11 $\quad$\textbf{Combine halfspaces $H_{k}^{\circ}$ and $H_{k}^{+}$
to form one halfspace $H_{k+1}^{\circ}$:}

12 $\quad$$\quad$If $\partial H_{k}^{\circ}\cap\partial H_{k}^{+}=\emptyset$
or $d(x_{k},\partial H_{k}^{\circ}\cap\partial H_{k}^{+})>\frac{\alpha}{k^{2}}$ 

13 $\quad$$\quad$$\quad$Let $H_{k+1}^{\circ}=H_{k}^{+}$

14 $\quad$$\quad$else

15 $\quad$$\quad$$\quad$Project $-f^{\prime}(x_{k})$ onto $\cone(\{n_{k}^{\circ},n_{k}^{+}\})$
to get $v\in\mathbb{R}^{n}$, 

16 $\quad$$\quad$$\quad$$\quad$where $n_{k}^{\circ}$ and $n_{k}^{+}$
are the outward normal vectors of $H_{k}^{\circ}$ and $H_{k}^{+}$.

17 $\quad$$\quad$$\quad$Project $x_{k}$ onto $\partial H_{k}^{\circ}\cap\partial H_{k}^{+}$
to get $\tilde{x}_{k}$.

18 $\quad$$\quad$$\quad$Let $H_{k+1}^{\circ}:=\{x:\langle v,x-\tilde{x}_{k}\rangle\leq0\}$

19 $\quad$$\quad$end if

20 $\quad$else

21 $\quad$$\quad$ Let $H_{k+1}^{\circ}=H_{k}^{+}$

22 $\quad$end if

23 end for
\end{algorithm}
Algorithm \ref{alg:general-obj-optim} is actually a two stage process.
We refer to the iterations of finding $\{x_{k}\}$, $\{H_{k}^{\circ}\}$
and $\{H_{k}^{+}\}$ as the outer iterations, and the iterations of
the constrained steepest descent algorithm to find $x_{k}$ as the
inner iterations. 

We didn't mention the starting iterate $x_{k}^{\circ}$ for the constrained
steepest descent algorithm. We can let $x_{k}^{\circ}=x_{k-1}$ for
$k>1$, but setting $x_{k}^{\circ}=x_{1}^{\circ}$ is sufficient for
our analysis.

Throughout the algorithm, the points $x_{k}^{*}$ are not found explicitly.
The distance $\|x_{k}-x_{k}^{*}\|$ can be estimated from Theorem
\ref{thm:convergence-steepest-descent}.

We make a few remarks about Algorithm \ref{alg:general-obj-optim}.
At the beginning of the algorithm, the sets $H_{1}^{+}$ and $H_{1}^{\circ}$
equal $\mathbb{R}^{n}$, but after some point, they become proper
halfspaces. It is clear from the construction of $H_{k+1}^{\circ}$
that $H_{k}^{\circ}\cap H_{k}^{+}\subset H_{k+1}^{\circ}$, and that
$H_{k}^{+}$ are designed so that $C\subset H_{k}^{+}$, so $C\subset H_{k}^{\circ}$.

Assume that $H_{k}^{\circ}$ and $H_{k}^{+}$ are proper halfspaces.
Then the sets $\partial H_{k}^{\circ}$ and $\partial H_{k}^{+}$
are affine spaces with codimension 1. In order for $\partial H_{k}^{\circ}\cap\partial H_{k}^{+}=\emptyset$,
the normals of the halfspaces have to be in the same direction. The
condition 
\[
d(x_{k},\partial H_{k}^{\circ}\cap\partial H_{k}^{+})>\frac{\alpha}{k^{2}}\geq\|x_{k}-x_{k}^{*}\|
\]
 implies that $x_{k}^{*}$ cannot be on $\partial H_{k}^{\circ}\cap\partial H_{k}^{+}$,
so $x_{k}^{*}$ has to lie only on either $\partial H_{k}^{\circ}$
or $\partial H_{k}^{+}$, but not both. By the workings of Algorithm
\ref{alg:general-obj-optim}, $x_{k}^{*}$ cannot lie on $\partial H_{k}^{\circ}$,
and must lie on $\partial H_{k}^{+}$. This explains why $H_{k+1}^{\circ}=H_{k}^{+}$
in the situations specified. 
\begin{thm}
\label{thm:conv-rates-gen-alg}(Convergence of Algorithm \ref{alg:general-obj-optim})
Consider Algorithm \ref{alg:general-obj-optim}. We have 

(1) $\{f(x^{*})-f(x_{k}^{*})\}_{k=1}^{\infty}\in O(1/k)$. 

(2) $\{\|x_{k}^{*}-x^{*}\|\}\in O(1/\sqrt{k})$.

(3) $\{f(x^{*})-f(x_{k})\}\in O(1/k)$, and $\{\|x_{k}-x^{*}\|\}\in O(1/\sqrt{k})$.\end{thm}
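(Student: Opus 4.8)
The plan is to run the argument of Theorem~\ref{thm:Haugazeau-rate}(a) for the sequence of \emph{exact} intermediate minimizers $x_{k}^{*}:=\arg\min\{f(x):x\in H_{k}^{\circ}\cap H_{k}^{+}\}$, treating the inexact gradient solve (which only returns $x_{k}$ with $\|x_{k}-x_{k}^{*}\|\le\alpha/k^{2}$) as an $O(1/k^{2})$ perturbation, and closing the recursion with Lemma~\ref{lem:conv-rate-seq}(a) in place of (b); the bounds for $x_{k}$ in part~(3) then fall out of the triangle inequality and $L$-smoothness. First I would record the structural invariants. Each $H_{j}$ produced by the separating-halfspace procedure is an exact supporting halfspace of $C$ by the subgradient inequality, so $C\subset H_{k}^{+}$; and a short induction shows $C\subset H_{k}^{\circ}$, because in every branch of the ``combine'' step one has $H_{k}^{\circ}\cap H_{k}^{+}\subset H_{k+1}^{\circ}$ (for the cone branch this is since $v\in\cone(\{n_{k}^{\circ},n_{k}^{+}\})$ while $\tilde{x}_{k}\in\partial H_{k}^{\circ}\cap\partial H_{k}^{+}$). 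Hence $x^{*}\in H_{k}^{\circ}\cap H_{k}^{+}$, so $\delta_{k}:=f(x^{*})-f(x_{k}^{*})\ge0$ and, by optimality of $x_{k}^{*}$, $\langle f^{\prime}(x_{k}^{*}),x^{*}-x_{k}^{*}\rangle\ge0$; moreover $\delta_{k}\le f(x^{*})-\min_{\mathbb{R}^{n}}f$, which with $\mu$-strong convexity keeps every $x_{k}^{*}$ and $x_{k}$ in a fixed ball $B$ and bounds $\|f^{\prime}(x_{k}^{*})\|$, and also forces $2\mu\delta_{k}\le\|f^{\prime}(x^{*})\|^{2}$ automatically, so all square roots below are real.

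Next, the step-length estimate. Put $H^{*}:=\{x:\langle -f^{\prime}(x^{*}),x-x^{*}\rangle\le0\}\supset C$. Applying Lemma~\ref{lem:dist-to-supp-halfspace}(a) at $(\bar{x},x)=(x^{*},x_{k}^{*})$ gives $d(x_{k}^{*},C)\ge d(x_{k}^{*},H^{*})\ge\bar{d}_{k}$ with $\bar{d}_{k}:=\tfrac1\mu\big[\|f^{\prime}(x^{*})\|-\sqrt{\|f^{\prime}(x^{*})\|^{2}-2\mu\delta_{k}}\big]$. Since $H_{k+1}^{+}$ realizes $\max_{j}d(x_{k},H_{j})$, the linear metric inequality (Definition~\ref{def:lin-metric-ineq}) gives $d(x_{k},H_{k+1}^{+})\ge\tfrac1\kappa d(x_{k},C)\ge\tfrac1\kappa(\bar{d}_{k}-\tfrac{\alpha}{k^{2}})$, hence $d(x_{k}^{*},H_{k+1}^{+})\ge\tfrac1\kappa\bar{d}_{k}-\tfrac{(1+\kappa)\alpha}{\kappa k^{2}}$. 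As $x_{k}^{*}\in H_{k+1}^{\circ}$ (shown above) and $x_{k+1}^{*}\in H_{k+1}^{+}$, this quantity is a lower bound for $\|x_{k+1}^{*}-x_{k}^{*}\|$; squaring (splitting into the cases where $\tfrac1\kappa\bar{d}_{k}$ does or does not exceed the error term, and using $\bar{d}_{k}\le\|f^{\prime}(x^{*})\|/\mu$) yields $\tfrac\mu2\|x_{k+1}^{*}-x_{k}^{*}\|^{2}\ge\tfrac{\mu}{2\kappa^{2}}\bar{d}_{k}^{2}-\tfrac{c_{1}}{k^{2}}$ for a constant $c_{1}$ depending only on $\mu,\kappa,\alpha,\|f^{\prime}(x^{*})\|$.

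The technical heart, and the step I expect to be the main obstacle, is the \emph{approximate optimality of $x_{k}^{*}$ on $H_{k+1}^{\circ}$}: $\langle f^{\prime}(x_{k}^{*}),x_{k+1}^{*}-x_{k}^{*}\rangle\ge-c_{2}/k^{2}$ for a constant $c_{2}$ depending only on $L$ and the radius of $B$. In exact Haugazeau's algorithm $H_{k+1}^{\circ}$ is exactly the halfspace on which $x_{k}^{*}$ is the \emph{unconstrained} minimizer of $f$, so this inner product is $\ge0$; here $H_{k+1}^{\circ}$ only approximates that halfspace, and the combine step is engineered to control the error. First-order optimality on $H_{k}^{\circ}\cap H_{k}^{+}$ gives $-f^{\prime}(x_{k}^{*})\in N_{H_{k}^{\circ}\cap H_{k}^{+}}(x_{k}^{*})\subset\cone(\{n_{k}^{\circ},n_{k}^{+}\})$. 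When the cone-projection branch is taken, the branch condition $d(x_{k},\partial H_{k}^{\circ}\cap\partial H_{k}^{+})\le\alpha/k^{2}$ forces $\|\tilde{x}_{k}-x_{k}^{*}\|=O(1/k^{2})$, while nonexpansiveness of projection onto $\cone(\{n_{k}^{\circ},n_{k}^{+}\})$ (which fixes $-f^{\prime}(x_{k}^{*})$) plus $L$-smoothness gives $\|v+f^{\prime}(x_{k}^{*})\|\le L\|x_{k}-x_{k}^{*}\|$; expanding $\langle v,x_{k+1}^{*}-\tilde{x}_{k}\rangle\le0$ and bounding $\|x_{k+1}^{*}-\tilde{x}_{k}\|$ by the radius of $B$ produces the claim. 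In the branches with $H_{k+1}^{\circ}=H_{k}^{+}$ one uses the invariant (established inductively, as in the discussion following Algorithm~\ref{alg:general-obj-optim}) that $x_{k}^{*}$ lies on $\partial H_{k}^{+}$ and not on $\partial H_{k}^{\circ}$, so that $x_{k}^{*}$ is in fact a \emph{genuine} minimizer of $f$ on $H_{k+1}^{\circ}=H_{k}^{+}$ and no error term appears; verifying this invariant in every degenerate configuration is where the careful bookkeeping lies.

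Granting this, $\mu$-strong convexity gives $\delta_{k}-\delta_{k+1}=f(x_{k+1}^{*})-f(x_{k}^{*})\ge\langle f^{\prime}(x_{k}^{*}),x_{k+1}^{*}-x_{k}^{*}\rangle+\tfrac\mu2\|x_{k+1}^{*}-x_{k}^{*}\|^{2}\ge\tfrac{\mu}{2\kappa^{2}}\bar{d}_{k}^{2}-\tfrac{c_{1}+c_{2}}{k^{2}}$. Writing $\bar{d}_{k}=\tfrac{\|f^{\prime}(x^{*})\|}{\mu}\big[1-\sqrt{1-\tfrac{2\mu}{\|f^{\prime}(x^{*})\|^{2}}\delta_{k}}\big]$ puts the recursion in the form $\delta_{k+1}\le\delta_{k}-\epsilon_{1}\big[1-\sqrt{1-\epsilon_{2}\delta_{k}}\big]^{2}+\tfrac{\alpha^{\prime}}{k^{2}}$ with $\epsilon_{2}=\tfrac{2\mu}{\|f^{\prime}(x^{*})\|^{2}}$ and $\epsilon_{1}$ as in Theorem~\ref{thm:Haugazeau-rate}(a), so Lemma~\ref{lem:conv-rate-seq}(a) gives $\{\delta_{k}\}\in O(1/k)$, which is~(1). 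For~(2), if $\delta_{k}=0$ then $x_{k}^{*}=x^{*}$ by uniqueness, and otherwise Lemma~\ref{lem:dist-to-supp-halfspace}(b) at $(\bar{x},x)=(x^{*},x_{k}^{*})$ (applicable since $\langle f^{\prime}(x_{k}^{*}),x^{*}-x_{k}^{*}\rangle\ge0$) gives $\|x_{k}^{*}-x^{*}\|\le\sqrt{2\delta_{k}/\mu}\in O(1/\sqrt{k})$, exactly as in~\eqref{eq:conv-to-optimizer}. Finally~(3): $f(x_{k})\ge f(x_{k}^{*})$ gives $f(x^{*})-f(x_{k})\le\delta_{k}$, while $L$-smoothness gives $f(x_{k})-f(x_{k}^{*})\le\|f^{\prime}(x_{k}^{*})\|\,\|x_{k}-x_{k}^{*}\|+\tfrac L2\|x_{k}-x_{k}^{*}\|^{2}\in O(1/k^{2})$, so $f(x^{*})-f(x_{k})\in O(1/k)$; and $\|x_{k}-x^{*}\|\le\|x_{k}-x_{k}^{*}\|+\|x_{k}^{*}-x^{*}\|\le\alpha/k^{2}+O(1/\sqrt{k})\in O(1/\sqrt{k})$.
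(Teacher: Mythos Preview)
Your proposal is correct and follows essentially the paper's proof: the same strong-convexity master inequality, the same case split on the combine step (the cone branch handled via nonexpansiveness of $P_{\cone(\{n_k^\circ,n_k^+\})}$ plus $L$-Lipschitzness of $f'$, the $H_{k+1}^\circ=H_k^+$ branch via $x_k^*$ being the exact minimizer on $H_k^+$), the same step-length lower bound through $d(x_k^*,H_{k+1}^+)$ and Lemma~\ref{lem:dist-to-supp-halfspace}(a), and closure by Lemmas~\ref{lem:conv-rate-seq}(a) and~\ref{lem:dist-to-supp-halfspace}(b). The only notable deviation is that the paper invokes the algorithmic stopping condition $d(x_k,H_{k+1}^+)\ge 2\|x_k-x_k^*\|$ to obtain a clean bound $d(x_k^*,H_{k+1}^+)\ge\tfrac{1}{3\kappa}d(x_k^*,C)$ with no error term, whereas you never use that condition and instead absorb the resulting $O(1/k^{2})$ slack directly into the $\alpha'/k^{2}$ term of Lemma~\ref{lem:conv-rate-seq}(a); both routes are valid.
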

\begin{proof}
From strong convexity, we have 
\begin{equation}
f(x_{k+1}^{*})-f(x_{k}^{*})-\frac{\mu}{2}\|x_{k+1}^{*}-x_{k}^{*}\|^{2}\geq\langle f^{\prime}(x_{k}^{*}),x_{k+1}^{*}-x_{k}^{*}\rangle.\label{eq:master-ineq}
\end{equation}
Recall that $n_{k}^{\circ}$ and $n_{k}^{+}$ are the outward normals
of the halfspaces $H_{k}^{\circ}$ and $H_{k}^{+}$ respectively.
The optimality conditions imply that $-f^{\prime}(x_{k}^{*})\in\cone(\{n_{k}^{\circ},n_{k}^{+}\})$. 

When $k=1$ or $2$, the halfspace $H_{k}^{\circ}$ equals $\mathbb{R}^{n}$,
so $H_{k+1}^{\circ}$ equals $H_{k}^{+}$. In the case when $d(x_{k},\partial H_{k}^{\circ}\cap\partial H_{k}^{+})>\frac{\alpha}{k^{2}}$,
we also have $H_{k+1}^{\circ}=H_{k}^{+}$. In these cases, $H_{k+1}^{\circ}=\{x:\langle f^{\prime}(x_{k}^{*}),x-x_{k}^{*}\rangle\geq0\}$.
The inequality \eqref{eq:master-ineq} reduces to 
\begin{equation}
f(x_{k+1}^{*})-f(x_{k}^{*})-\frac{\mu}{2}\|x_{k+1}^{*}-x_{k}^{*}\|^{2}\geq0.\label{eq:master-1}
\end{equation}
We now address the other case where $H_{k}^{\circ}$ and $H_{k}^{+}$
are both proper halfspaces and $d(x_{k},\partial H_{k}^{\circ}\cap\partial H_{k}^{+})\leq\frac{\alpha}{k^{2}}$.
Since $v=P_{\scriptsize\cone(\{n_{k}^{\circ},n_{k}^{+}\})}(-f^{\prime}(x_{k}))$
and $-f^{\prime}(x_{k}^{*})=P_{\scriptsize\cone(\{n_{k}^{\circ},n_{k}^{+}\})}(-f^{\prime}(x_{k}^{*}))$,
the nonexpansivity of the projection onto the convex set $\cone(\{n_{k}^{\circ},n_{k}^{+}\})$
and the assumption that $f^{\prime}(\cdot)$ is Lipschitz with parameter
$L$ gives us 
\begin{equation}
\|f^{\prime}(x_{k}^{*})-(-v)\|\leq\|f^{\prime}(x_{k}^{*})-f^{\prime}(x_{k})\|\leq L\|x_{k}^{*}-x_{k}\|.\label{eq:first-chain}
\end{equation}
The halfspace $H_{k+1}^{\circ}$ equals $\{x:\langle v,x-\tilde{x}_{k}\rangle\leq0\}$.
We must have $x_{k+1}^{*}\in H_{k+1}^{\circ}$, which gives 
\begin{equation}
\langle-v,x_{k+1}^{*}-\tilde{x}_{k}\rangle\geq0.\label{eq:second-chain}
\end{equation}
Before we prove \eqref{eq:master-2}, we note that 
\begin{equation}
\|x_{k}-\tilde{x}_{k}\|\leq d(x_{k},\partial H_{k}^{\circ}\cap\partial H_{k}^{+})\leq\frac{\alpha}{k^{2}},\label{eq:third-chain}
\end{equation}
and that $\|x_{k}-x_{k}^{*}\|\leq\frac{\alpha}{k^{2}}$ is a requirement
in Algorithm \ref{alg:general-obj-optim}. We continue with the arithmetic
in \eqref{eq:master-ineq} to get 
\begin{eqnarray}
 &  & f(x_{k+1}^{*})-f(x_{k}^{*})-\frac{\mu}{2}\|x_{k+1}^{*}-x_{k}^{*}\|^{2}\label{eq:master-2}\\
 & \geq & \langle f^{\prime}(x_{k}^{*}),x_{k+1}^{*}-x_{k}^{*}\rangle\nonumber \\
 & = & \langle-v,x_{k+1}^{*}-\tilde{x}_{k}\rangle+\langle-v,\tilde{x}_{k}-x_{k}^{*}\rangle+\langle f^{\prime}(x_{k}^{*})+v,x_{k+1}^{*}-x_{k}^{*}\rangle\nonumber \\
 & \geq & 0-\|v\|\|\tilde{x}_{k}-x_{k}^{*}\|-\|f^{\prime}(x_{k}^{*})+v\|\|x_{k+1}^{*}-x_{k}^{*}\|\mbox{ (by }\eqref{eq:second-chain}\mbox{)}\nonumber \\
 & \geq & -\|v\|[\|x_{k}-x_{k}^{*}\|+\|x_{k}-\tilde{x}_{k}\|]-L\|x_{k}-x_{k}^{*}\|\|x_{k+1}^{*}-x_{k}^{*}\|\mbox{ (by }\eqref{eq:first-chain}\mbox{)}\nonumber \\
 & \geq & -\frac{\alpha}{k^{2}}[2\|v\|+L\|x_{k+1}^{*}-x_{k}^{*}\|]\mbox{ (by }\eqref{eq:third-chain}\mbox{).}\nonumber 
\end{eqnarray}
Next, since $v$ is the projection of $f^{\prime}(x_{k})$ onto $\cone(\{n_{k}^{\circ},n_{k}^{+}\})$,
which is a convex set containing the origin, we have $\|v\|\leq\|f^{\prime}(x_{k})\|$.
Note that $\|x_{k}-x_{k}^{*}\|\leq\frac{\alpha}{k}^{2}\leq\alpha$.
So 
\begin{eqnarray*}
\|v\| & \leq & \|f^{\prime}(x_{k})\|\\
 & \leq & \|f^{\prime}(x_{k}^{*})\|+\|f^{\prime}(x_{k})-f^{\prime}(x_{k}^{*})\|\\
 & \leq & \|f^{\prime}(x_{k}^{*})\|+L\|x_{k}-x_{k}^{*}\|\\
 & \leq & \max\{\|f^{\prime}(x)\|:f(x)\leq f(x^{*})\}+L\alpha.
\end{eqnarray*}
Since $f(x_{k}^{*})\leq f(x^{*})$ and $f(x_{k+1}^{*})\leq f(x^{*})$,
the strong convexity of $f(\cdot)$ implies that $x_{k}^{*}$ and
$x_{k+1}^{*}$ both lie in a bounded set. (See Lemma \ref{lem:est-x-star-k}.)
Therefore, there is a constant $\bar{\alpha}>0$ such that $\alpha[L\|x_{k+1}^{*}-x_{k}^{*}\|+2\|v\|]\leq\bar{\alpha}$.
Continuing from \eqref{eq:master-2}, we have 
\begin{equation}
f(x_{k+1}^{*})-f(x_{k}^{*})-\frac{\mu}{2}\|x_{k+1}^{*}-x_{k}^{*}\|^{2}\geq-\frac{\bar{\alpha}}{k^{2}}.\label{eq:master-3}
\end{equation}

Since $x_{k+1}^{*}\in H_{k+1}^{+}$, we have $\|x_{k+1}^{*}-x_{k}^{*}\|\geq d(x_{k}^{*},H_{k+1}^{+})$.
We now prove that $d(x_{k}^{*},H_{k+1}^{+})\geq\frac{1}{3\kappa}d(x_{k}^{*},C)$.
We get $\frac{1}{\kappa}d(x_{k},C)\leq d(x_{k},H_{k+1}^{+})$ from
linear metric inequality. From $2\|x_{k}-x_{k}^{*}\|\leq d(x_{k},H_{k+1}^{+})$
and the triangular inequality $d(x_{k},H_{k+1}^{+})\leq\|x_{k}-x_{k}^{*}\|+d(x_{k}^{*},H_{k+1}^{+})$,
we have $\|x_{k}-x_{k}^{*}\|\leq d(x_{k}^{*},H_{k+1}^{+})$. Together
with the fact that $\kappa\geq1$, we have 
\begin{eqnarray*}
\frac{1}{\kappa}d(x_{k}^{*},C) & \leq & \frac{1}{\kappa}\|x_{k}-x_{k}^{*}\|+\frac{1}{\kappa}d(x_{k},C)\\
 & \leq & \|x_{k}-x_{k}^{*}\|+d(x_{k},H_{k+1}^{+})\\
 & \leq & 2\|x_{k}-x_{k}^{*}\|+d(x_{k}^{*},H_{k+1}^{+})\\
 & \leq & 3d(x_{k}^{*},H_{k+1}^{+}).
\end{eqnarray*}
We then have 
\begin{eqnarray}
\|x_{k+1}^{*}-x_{k}^{*}\| & \geq & d(x_{k}^{*},H_{k+1}^{+})\label{eq:x-star-diff-LB}\\
 & \geq & \frac{1}{3\kappa}d(x_{k}^{*},C)\nonumber \\
 & \geq & \frac{1}{3\kappa\mu}\Big[\|f^{\prime}(x^{*})\|-\sqrt{\|f^{\prime}(x^{*})\|^{2}-2\mu[f(x^{*})-f(x_{k}^{*})]}\Big].\nonumber 
\end{eqnarray}
(The third inequality comes from Lemma \ref{lem:dist-to-supp-halfspace}(a).) 

Let $\epsilon_{1}=\frac{\|f^{\prime}(x^{*})\|}{3\kappa\mu}$ and $\epsilon_{2}=\frac{2\mu}{\|f^{\prime}(x^{*})\|}$.
Putting \eqref{eq:x-star-diff-LB} into formulas \eqref{eq:master-1}
and \eqref{eq:master-3} gives 
\begin{eqnarray*}
 &  & f(x^{*})-f(x_{k+1}^{*})\leq f(x^{*})-f(x_{k}^{*})-\frac{\mu\epsilon_{1}}{2}\bigg[1-\sqrt{1-\epsilon_{2}[f(x^{*})-f(x_{k}^{*})]}\bigg]^{2}+\frac{\bar{\alpha}}{k^{2}}\\
 & \Rightarrow & \delta_{k+1}^{*}\leq\delta_{k}^{*}-\frac{\mu\epsilon_{1}}{2}\bigg[1-\sqrt{1-\epsilon_{2}\delta_{k}^{*}}\bigg]^{2}+\frac{\bar{\alpha}}{k^{2}},
\end{eqnarray*}
where $\delta_{k}^{*}:=f(x^{*})-f(x_{k}^{*})$. Part (1) now follows
from Lemma \ref{lem:conv-rate-seq}(a).

The optimality conditions on $x_{k}^{*}$ implies that the point $x^{*}$
must lie in the halfspace $\{x:\langle f^{\prime}(x_{k}^{*}),x-x_{k}^{*}\rangle\geq0\}$.
Lemma \ref{lem:dist-to-supp-halfspace}(b) then implies 
\[
\|x^{*}-x_{k}^{*}\|\leq\sqrt{\frac{2}{\mu}[f(x^{*})-f(x_{k}^{*})]}.
\]
The claim in (2) follows immediately from the above inequality and
(1).

To see (3), note that since $f(\cdot)$ is locally Lipschitz at $x^{*}$,
we have 
\[
\limsup_{k\to\infty}\frac{|f(x_{k})-f(x_{k}^{*})|}{\|x_{k}-x_{k}^{*}\|}<\infty.
\]
Since $\{\|x_{k}-x_{k}^{*}\|\}\in O(1/k^{2})$, we have $\{|f(x_{k})-f(x_{k}^{*})|\}\in O(1/k^{2})$.
Next, since $\{|f(x^{*})-f(x_{k}^{*})|\}\in O(1/k)$, we have $\{|f(x^{*})-f(x_{k})|\}\in O(1/k)$
as needed. The other inequality $\{\|x_{k}-x^{*}\|\}\in O(1/\sqrt{k})$
can also be proved with these steps.\end{proof}
\begin{lem}
\label{lem:est-x-star-k}(Estimate of $x_{k}^{*}$) In Algorithm \ref{alg:general-obj-optim},
the points $x_{k}^{*}$ satisfy 
\[
\left\Vert x_{k}^{*}-x^{*}+\frac{1}{\mu}f^{\prime}(x^{*})\right\Vert \leq\left\Vert \frac{1}{\mu}f^{\prime}(x^{*})\right\Vert .
\]
\end{lem}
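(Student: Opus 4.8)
The plan is to use just two facts: that $x^{*}$ is feasible for the subproblem that defines $x_{k}^{*}$, and the strong convexity inequality for $f$ based at $x^{*}$. No estimate from the algorithm's inner iterations is needed.

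First I would record that $C\subset H_{k}^{\circ}\cap H_{k}^{+}$ for every $k$. This is exactly the observation made in the discussion preceding the lemma: each separating halfspace $H_{k}^{+}$ is constructed to contain $C$, and each $H_{k+1}^{\circ}$ is obtained either as a previous $H_{k}^{+}$ or as a halfspace containing $H_{k}^{\circ}\cap H_{k}^{+}$, so by induction $C\subset H_{k}^{\circ}$ as well. Since $x^{*}\in C$, the point $x^{*}$ is feasible for the problem $\min\{f(x):x\in H_{k}^{\circ}\cap H_{k}^{+}\}$, and therefore $f(x_{k}^{*})\leq f(x^{*})$.

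Next I would apply strong convexity of $f$ with base point $x^{*}$ and test point $x_{k}^{*}$:
\[
f(x_{k}^{*})\geq f(x^{*})+\langle f^{\prime}(x^{*}),x_{k}^{*}-x^{*}\rangle+\frac{\mu}{2}\|x_{k}^{*}-x^{*}\|^{2}.
\]
Combining this with $f(x_{k}^{*})\leq f(x^{*})$ gives
\[
0\geq\langle f^{\prime}(x^{*}),x_{k}^{*}-x^{*}\rangle+\frac{\mu}{2}\|x_{k}^{*}-x^{*}\|^{2}.
\]
Multiplying through by $2/\mu$, adding $\frac{1}{\mu^{2}}\|f^{\prime}(x^{*})\|^{2}$ to both sides, and recognizing the resulting right-hand side as a perfect square yields
\[
\left\Vert x_{k}^{*}-x^{*}+\tfrac{1}{\mu}f^{\prime}(x^{*})\right\Vert ^{2}\leq\tfrac{1}{\mu^{2}}\|f^{\prime}(x^{*})\|^{2},
\]
and taking square roots gives the claim.

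There is essentially no obstacle; the only point that deserves care is the feasibility claim $x^{*}\in H_{k}^{\circ}\cap H_{k}^{+}$ for all $k$, which is why the inclusions $C\subset H_{k}^{+}$ and $C\subset H_{k}^{\circ}$ are spelled out in the text before the lemma. The resulting bound says precisely that every $x_{k}^{*}$ lies in the closed ball of radius $\|f^{\prime}(x^{*})\|/\mu$ centered at $x^{*}-f^{\prime}(x^{*})/\mu$, which is the bounded set invoked in the proof of Theorem \ref{thm:conv-rates-gen-alg} when the constant $\bar{\alpha}$ is introduced.
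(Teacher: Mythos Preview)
Your proof is correct and matches the paper's argument exactly: use strong convexity of $f$ at the base point $x^{*}$ together with $f(x_{k}^{*})\leq f(x^{*})$, then complete the square. The paper's proof is a one-line version of what you wrote; your only addition is making the feasibility step $x^{*}\in H_{k}^{\circ}\cap H_{k}^{+}$ and the completing-the-square algebra explicit.
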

\begin{proof}
From the $\mu$-strong convexity of $f(\cdot)$ and the fact that
$f(x_{k}^{*})\leq f(x^{*})$, we have $f(x^{*})+\langle f^{\prime}(x^{*}),x_{k}^{*}-x^{*}\rangle+\frac{\mu}{2}\|x_{k}^{*}-x^{*}\|^{2}\leq f(x_{k}^{*})\leq f(x^{*})$,
from which the conclusion follows.
\end{proof}

\subsection{Computational effort of Algorithm \ref{alg:general-obj-optim}}

We now calculate the amount of computational effort that Algorithm
\ref{alg:general-obj-optim} takes to find an iterate $x_{k}$ such
that $|f(x_{k})-f(x^{*})|\leq\epsilon$. The number of outer iterations
needed to find the iterate $x_{k}$ is, by definition, $k$. It therefore
remains to calculate the number of inner iterations corresponding
to each outer iteration. 

Consider the case when $\|x_{k}^{*}-x^{*}\|$ is small (or even zero)
for the final iteration $k$. Even though it means that the outer
iterations in Algorithm \ref{alg:general-obj-optim} have done well
to allow us to get a good $x_{k}^{*}$ once the required number of
inner iterations are performed, the number of inner iterations needed
to satisfy $d(x_{k},H_{k+1}^{+})\geq2\|x_{k}-x_{k}^{*}\|$ can be
excessively large. In view of this difficulty, we leave out the number
of inner iterations associated with the last outer iterate. Nevertheless,
when $d(x_{k},H_{k+1}^{+})$ and $\|x_{k}-x_{k}^{*}\|$ are small,
we have the following estimates on $f(x^{*})-f(x_{k}^{*})$, $\|x_{k}^{*}-x^{*}\|$,
and hence $|f(x^{*})-f(x_{k})|$ and $\|x_{k}-x^{*}\|$ in \eqref{eq:subsequent-formulas-1}
of Theorem \ref{thm:performance-est} from quantities that are calculated
throughout Algorithm \ref{alg:general-obj-optim}. 
\begin{thm}
\label{thm:performance-est}(Performance estimates) Consider Algorithm
\ref{alg:general-obj-optim}. Let $H^{*}$ be the halfspace $\{x:\langle f^{\prime}(x^{*}),x-x^{*}\rangle\geq0\}$.
We have
\begin{enumerate}
\item $0\leq f(x^{*})-f(x_{k}^{*})\leq\|f^{\prime}(x^{*})\|d(x_{k}^{*},H^{*})$.
\item $\|x_{k}^{*}-x^{*}\|\leq\sqrt{2\|f^{\prime}(x^{*})\|d(x_{k}^{*},H^{*})}$.
\end{enumerate}
Suppose $\{f_{j}(\cdot)\}_{j=1}^{m}$ satisfies $\kappa$ linear metric
inequality and an iterate $x_{k}$ of the minimization subproblem
is such that 
\[
\bar{d}:=[\|x_{k}-x_{k}^{*}\|+\kappa d(x_{k},H_{k+1}^{+})].
\]
Then 
\begin{equation}
d(x_{k}^{*},H^{*})\leq\bar{d}.\label{eq:d-d-bar-ineq}
\end{equation}
Hence if $f(\cdot)$ is Lipschitz with constant $M$ in a neighborhood
$U$ of $x^{*}$ and both $x_{k}$ and $x_{k}^{*}$ lie in $U$, then
\begin{subequations} 
\begin{eqnarray}
 &  & |f(x_{k})-f(x^{*})|\leq\|f^{\prime}(x^{*})\|\bar{d}+M\|x_{k}-x_{k}^{*}\|,\label{eq:subsequent-formulas-1}\\
 & \mbox{ and } & \|x_{k}-x^{*}\|\leq\|x_{k}-x_{k}^{*}\|+\sqrt{2\mu\|f^{\prime}(x^{*})\|\bar{d}}.\label{eq:subsequent-formulas-2}
\end{eqnarray}
\end{subequations}\end{thm}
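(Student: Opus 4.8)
The plan is to prove the five assertions in sequence, each being a short consequence of tools already assembled in the paper. For (1), recall that $H^*=\{x:\langle f'(x^*),x-x^*\rangle\geq0\}$ contains $C$, hence contains every $x_k^*$ (since $x_k^*\in H_k^+\supset C$); moreover $f(x^*)\leq f(x_k^*)$ fails — in fact $f(x_k^*)\leq f(x^*)$ because $x_k^*$ minimizes over a set containing $C\ni x^*$. So $f(x^*)-f(x_k^*)\geq0$. For the upper bound, I would invoke Lemma~\ref{lem:dist-to-supp-halfspace}(a) applied with $\bar x=x^*$ and $x=x_k^*$: it gives $d(x_k^*,H^*)\geq\frac1\mu\big[\|f'(x^*)\|-\sqrt{\|f'(x^*)\|^2-2\mu[f(x^*)-f(x_k^*)]}\big]$. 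Solving this inequality for $f(x^*)-f(x_k^*)$ (multiply through, isolate the square root, square) yields $f(x^*)-f(x_k^*)\leq\|f'(x^*)\|\,d(x_k^*,H^*)-\tfrac\mu2 d(x_k^*,H^*)^2\leq\|f'(x^*)\|\,d(x_k^*,H^*)$, which is (1). Then (2) follows from (1) by Lemma~\ref{lem:dist-to-supp-halfspace}(b): the optimality condition for $x_k^*$ puts $x^*$ in $\{x:\langle f'(x_k^*),x-x_k^*\rangle\geq0\}$, so $\|x^*-x_k^*\|\leq\sqrt{\tfrac2\mu[f(x^*)-f(x_k^*)]}\leq\sqrt{\tfrac2\mu\|f'(x^*)\|\,d(x_k^*,H^*)}$.

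For \eqref{eq:d-d-bar-ineq}, the idea is to bound $d(x_k^*,H^*)$ through $d(x_k^*,C)$ and then through the quantities actually computed. By Lemma~\ref{lem:dist-to-supp-halfspace}(a) read in the reverse direction (or simply because $C\subset H^*$ so $d(x_k^*,H^*)\leq d(x_k^*,C)$), it suffices to control $d(x_k^*,C)$. Using the triangle-type inequality for distances to a convex set, $d(x_k^*,C)\leq\|x_k^*-x_k\|+d(x_k,C)$, and linear metric inequality gives $d(x_k,C)\leq\kappa\max_j d(x_k,H_j)=\kappa\,d(x_k,H_{k+1}^+)$ since $H_{k+1}^+$ is chosen as the halfspace of maximal distance. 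Combining, $d(x_k^*,H^*)\leq d(x_k^*,C)\leq\|x_k-x_k^*\|+\kappa\,d(x_k,H_{k+1}^+)=\bar d$, which is \eqref{eq:d-d-bar-ineq}. (I should double-check the direction $d(x_k^*,H^*)\le d(x_k^*,C)$: it holds because $C\subseteq H^*$, so projecting onto the larger set is at least as close — this is the only subtle orientation point.)

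Finally, for \eqref{eq:subsequent-formulas-1} and \eqref{eq:subsequent-formulas-2}, combine the pieces with the Lipschitz hypothesis on $f$ near $x^*$. We have $|f(x_k)-f(x^*)|\leq|f(x_k)-f(x_k^*)|+|f(x_k^*)-f(x^*)|\leq M\|x_k-x_k^*\|+\|f'(x^*)\|\,d(x_k^*,H^*)$ by the Lipschitz bound and part (1); substituting \eqref{eq:d-d-bar-ineq} gives \eqref{eq:subsequent-formulas-1}. Likewise $\|x_k-x^*\|\leq\|x_k-x_k^*\|+\|x_k^*-x^*\|\leq\|x_k-x_k^*\|+\sqrt{2\mu\|f'(x^*)\|\,d(x_k^*,H^*)}$ by part (2) — wait, part (2) gives the factor $\sqrt{2\|f'(x^*)\|d(x_k^*,H^*)}$ without the $\mu$; I would check whether the stated $\sqrt{2\mu\|f'(x^*)\|\bar d}$ in \eqref{eq:subsequent-formulas-2} reflects a different normalization of $d(\cdot,H^*)$ or is a typo, and adjust the constant accordingly — then substitute \eqref{eq:d-d-bar-ineq} to finish.

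The main obstacle I anticipate is purely bookkeeping: keeping the orientation of the halfspace $H^*$ straight (it is defined with $\geq0$, so it is the halfspace \emph{away} from the direction $-f'(x^*)$, and it must contain $C$), and making sure the two applications of Lemma~\ref{lem:dist-to-supp-halfspace} use it with the correct roles of $\bar x$ and $x$. There is no deep difficulty — every inequality is either strong convexity, the Lipschitz bound, a triangle inequality, or linear metric inequality — but the constants (the stray $\mu$ in \eqref{eq:subsequent-formulas-2}, and whether the quadratic correction term $-\tfrac\mu2 d^2$ is kept or dropped) need to be tracked carefully to match the statement exactly.
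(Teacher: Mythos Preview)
Your proposal is correct in outline and reaches the same conclusions as the paper, but parts (1) and (2) take a different route. For (1), the paper does not invoke Lemma~\ref{lem:dist-to-supp-halfspace}(a) at all: it simply uses the convexity inequality $f(x^{*})-f(x_{k}^{*})\leq\langle -f'(x^{*}),x_{k}^{*}-x^{*}\rangle=\|f'(x^{*})\|\,d(x_{k}^{*},H^{*})$, which is one line. Your detour through the lemma and the algebraic inversion works (and even yields the sharper bound with the extra $-\tfrac{\mu}{2}d^{2}$ term), but it is more labor for the same conclusion. For (2), the situation is reversed: the paper argues geometrically, placing $x_{k}^{*}$ in the ball of Lemma~\ref{lem:est-x-star-k} and computing $\|x_{k}^{*}-x^{*}\|$ by trigonometry (Figure~\ref{fig:circle-fig}), whereas your use of Lemma~\ref{lem:dist-to-supp-halfspace}(b) via the optimality condition $\langle f'(x_{k}^{*}),x^{*}-x_{k}^{*}\rangle\geq 0$ is cleaner and avoids the picture entirely. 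For \eqref{eq:d-d-bar-ineq} and \eqref{eq:subsequent-formulas-1}--\eqref{eq:subsequent-formulas-2} your argument is essentially identical to the paper's. Two small corrections: your opening remark that $H^{*}$ ``contains every $x_{k}^{*}$'' is false (typically $x_{k}^{*}\notin H^{*}$, which is precisely why $d(x_{k}^{*},H^{*})>0$) and should be deleted---fortunately you never use it. And you are right to flag the constant in \eqref{eq:subsequent-formulas-2}: both your derivation and the paper's trigonometric computation actually produce $\sqrt{\tfrac{2}{\mu}\|f'(x^{*})\|\,\bar d}$, so the $\mu$ in the numerator of the stated bound (and the missing $1/\mu$ in statement (2)) appear to be typos in the paper.
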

\begin{proof}
Recall that $x^{*}$ is the solution to the original problem. Since
$f(x_{k}^{*})<f(x^{*})$, then either $d(x_{k}^{*},H^{*})>0$ or $x_{k}^{*}=x^{*}$.
When $x_{k}^{*}=x^{*}$, all the conclusions in our result would be
true, so we only look at the first case. It is clear that $d(x_{k}^{*},H^{*})=\langle-\frac{f^{\prime}(x^{*})}{\|f^{\prime}(x^{*})\|},x_{k}^{*}-x^{*}\rangle$.
By the convexity of $f(\cdot)$, we have 
\begin{equation}
f(x^{*})-f(x_{k}^{*})\leq\langle-f^{\prime}(x^{*}),x_{k}^{*}-x^{*}\rangle=\|f^{\prime}(x^{*})\|d(x_{k}^{*},H^{*}).\label{eq:est-2}
\end{equation}

Next, we find an upper bound for $\|x_{k}^{*}-x^{*}\|$. Lemma \ref{lem:est-x-star-k}
states that $x_{k}^{*}$ lies in a ball with radius $\|\frac{1}{\mu}f^{\prime}(x^{*})\|$,
center $z:=x^{*}-\frac{1}{\mu}f^{\prime}(x^{*})$, and has the point
$x^{*}$ on its boundary. See Figure \ref{fig:circle-fig}. The furthest
point $x$ in this ball from $x^{*}$ that satisfies $d(x,H^{*})\leq d(x_{k}^{*},H^{*})$
has to be such that $d(x,H^{*})=d(x_{k}^{*},H^{*})$ and $x$ being
on the boundary of this ball. 

\begin{figure}[!h]
\includegraphics[scale=0.3]{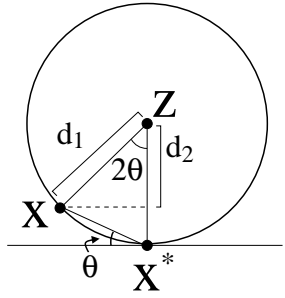}

\caption{\label{fig:circle-fig}Diagram used in the proof of Theorem \ref{thm:performance-est}.
The distances $d_{1}$ and $d_{2}$ equal $\|\frac{1}{\mu}f^{\prime}(x^{*})\|$
and $\|\frac{1}{\mu}f^{\prime}(x^{*})\|-d(x_{k}^{*},H^{*})$ respectively.}

\end{figure}

Finding an upper bound for $\|x_{k}^{*}-x^{*}\|$ is now an easy exercise
in trigonometry. Let $\theta$ be the angle that the line through
$x$ and $x^{*}$ makes with $\partial H^{*}$. We thus have $\angle xzx^{*}=2\theta$.
So $\cos2\theta=\frac{\|f^{\prime}(x^{*})\|-\mu d(x_{k}^{*},H^{*})}{\|f^{\prime}(x^{*})\|}$.
Making use of $\cos2\theta=1-2[\sin\theta]^{2}$, we have 
\[
\sin\theta=\sqrt{\frac{\mu d(x_{k}^{*},H^{*})}{2\|f^{\prime}(x^{*})\|}}.
\]
An upper bound for $\|x_{k}^{*}-x^{*}\|$ is thus $d(x_{k}^{*},H^{*})/\sin\theta$,
so 
\begin{equation}
\|x_{k}^{*}-x^{*}\|\leq d(x_{k}^{*},H^{*})/\sin\theta=\sqrt{2\mu\|f^{\prime}(x^{*})\|d(x_{k}^{*},H^{*})}.\label{eq:get-subseq-2}
\end{equation}
We have 
\begin{equation}
d(x_{k}^{*},H^{*})\leq d(x_{k}^{*},C)\leq\|x_{k}-x_{k}^{*}\|+d(x_{k},C)\leq\|x_{k}-x_{k}^{*}\|+\kappa d(x_{k},H_{k+1}^{+})=\bar{d}.\label{eq:est-1}
\end{equation}
To get \eqref{eq:subsequent-formulas-1}, we make use of \eqref{eq:est-2},
\eqref{eq:d-d-bar-ineq} and the assumption that $f(\cdot)$ is Lipschitz
with constant $M$ to get 
\begin{eqnarray*}
|f(x_{k})-f(x^{*})| & \leq & |f(x^{*})-f(x_{k}^{*})|+|f(x_{k}^{*})-f(x_{k})|\\
 & \leq & \|f^{\prime}(x^{*})\|\bar{d}+M\|x_{k}-x_{k}^{*}\|.
\end{eqnarray*}
Formula \eqref{eq:subsequent-formulas-2} follows easily from \eqref{eq:get-subseq-2}.
\end{proof}
We now calculate the number of inner iterations needed for outer iterations
$j\in\{1,\dots,k-1\}$ so that $|f(x_{k})-f(x^{*})|\leq\epsilon$.
As seen in Theorem \ref{thm:conv-rates-gen-alg}, the convergence
rate of $|f(x_{k})-f(x^{*})|$ is $O(1/k)$, or in other words, $O(1/\epsilon)$
outer iterations would ensure that $|f(x_{k})-f(x^{*})|\leq\epsilon$. 

To ensure that $\|x_{j}-x_{j}^{*}\|\leq\frac{\alpha}{j^{2}}$ for
$j\in\{1,\dots,k-1\}$, we need $O(\log(j^{2}))$ iterations. Since
the number of iterations $k$ is $O(1/\epsilon)$, we need at least
$O(\log(1/\epsilon^{2}))$ iterations for the $(k-1)$th inner subproblem.
We now proceed to find how the condition $d(x_{k},H_{k+1}^{+})\geq2\|x_{k}-x_{k}^{*}\|$
affects the number of inner iterations in each outer iteration. We
have the following inequalities.
\begin{rem}
\label{rem:upp-bdd-on-iter}If $\|x_{k}-x_{k}^{*}\|\leq\frac{1}{3\kappa}d(x_{k}^{*},C)$,
then linear metric inequality, the fact that $\kappa\geq1$, and the
triangular inequality implies 
\begin{eqnarray*}
d(x_{k},H_{k+1}^{+}) & \geq & \frac{1}{\kappa}d(x_{k},C)\\
 & \geq & \frac{1}{\kappa}d(x_{k}^{*},C)-\frac{1}{\kappa}\|x_{k}-x_{k}^{*}\|\\
 & \geq & 3\|x_{k}-x_{k}^{*}\|-\|x_{k}-x_{k}^{*}\|\\
 & \geq & 2\|x_{k}-x_{k}^{*}\|.
\end{eqnarray*}

\end{rem}
Remark \ref{rem:upp-bdd-on-iter} implies that in the $j$th outer
iteration, the number of inner iterations it takes to get $d(x_{j},H_{j+1}^{+})\geq2\|x_{j}-x_{j}^{*}\|$
is at most the number of iterations it takes to get $\|x_{j}-x_{j}^{*}\|\leq\frac{1}{3\kappa}d(x_{j}^{*},C)$. 
\begin{prop}
We continue the discussion of this subsection. Suppose $f(\cdot)$
is Lipschitz with constant $M$. If $d(x_{k}^{*},C)\leq\frac{\epsilon}{\|f^{\prime}(x^{*})\|+M}$,
and $d(x_{k},H_{k+1}^{+})\geq2\|x_{k}-x_{k}^{*}\|$, then $|f(x_{k})-f(x^{*})|\leq\epsilon$. \end{prop}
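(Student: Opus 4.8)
The plan is to estimate $|f(x_k)-f(x^*)|$ by writing it as $|f(x^*)-f(x_k^*)|+|f(x_k^*)-f(x_k)|$ and bounding each summand by a multiple of $d(x_k^*,C)$, which is controlled by hypothesis. No new machinery is needed: everything follows from Theorem \ref{thm:performance-est} together with the fact that $H^*$ and $H_{k+1}^+$ are halfspaces containing $C$.

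First I would bound $|f(x^*)-f(x_k^*)|$. Since $x_k^*$ minimizes $f(\cdot)$ over $H_k^\circ\cap H_k^+\supset C$, we have $f(x_k^*)\leq f(x^*)$, so this term equals $f(x^*)-f(x_k^*)$, and Theorem \ref{thm:performance-est}(1) gives $f(x^*)-f(x_k^*)\leq\|f'(x^*)\|\,d(x_k^*,H^*)$. Because $C\subset H^*$, we get $d(x_k^*,H^*)\leq d(x_k^*,C)$, hence $f(x^*)-f(x_k^*)\leq\|f'(x^*)\|\,d(x_k^*,C)$.

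Second I would bound $\|x_k-x_k^*\|$ by $d(x_k^*,C)$. Using the hypothesis $d(x_k,H_{k+1}^+)\geq 2\|x_k-x_k^*\|$, the triangle inequality $d(x_k,H_{k+1}^+)\leq\|x_k-x_k^*\|+d(x_k^*,H_{k+1}^+)$, and the fact that $C\subset H_{k+1}^+$ (so that $d(x_k^*,H_{k+1}^+)\leq d(x_k^*,C)$), we obtain $2\|x_k-x_k^*\|\leq\|x_k-x_k^*\|+d(x_k^*,C)$, i.e. $\|x_k-x_k^*\|\leq d(x_k^*,C)$. The local Lipschitz property of $f(\cdot)$ with constant $M$ (applied with $x_k$ and $x_k^*$ in the relevant neighborhood, as in Theorem \ref{thm:performance-est}) then gives $|f(x_k^*)-f(x_k)|\leq M\|x_k-x_k^*\|\leq M\,d(x_k^*,C)$.

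Adding the two bounds yields $|f(x_k)-f(x^*)|\leq(\|f'(x^*)\|+M)\,d(x_k^*,C)\leq\epsilon$, using $d(x_k^*,C)\leq\epsilon/(\|f'(x^*)\|+M)$. The argument is routine bookkeeping; the only substantive input beyond Theorem \ref{thm:performance-est} is that $H^*$ and $H_{k+1}^+$ both contain $C$ (the latter being built into Algorithm \ref{alg:general-obj-optim}), and I do not anticipate any real obstacle.
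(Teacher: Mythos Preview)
Your proof is correct and essentially the same as the paper's. The only cosmetic difference is in deriving $\|x_k-x_k^*\|\leq d(x_k^*,C)$: the paper routes through $d(x_k,C)$ (using $d(x_k^*,C)\geq d(x_k,C)-\|x_k-x_k^*\|\geq d(x_k,H_{k+1}^+)-\|x_k-x_k^*\|$), while you route through $d(x_k^*,H_{k+1}^+)$; both are the same $1$-Lipschitz/triangle-inequality argument applied in a different order.
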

\begin{proof}
We first prove that $d(x_{k},H_{k+1}^{+})\geq2\|x_{k}-x_{k}^{*}\|$
implies $\|x_{k}-x_{k}^{*}\|\leq d(x_{k}^{*},C)$. We have 
\[
d(x_{k}^{*},C)\geq d(x_{k},C)-\|x_{k}-x_{k}^{*}\|\geq d(x_{k},H_{k+1}^{+})-\|x_{k}-x_{k}^{*}\|\geq\|x_{k}-x_{k}^{*}\|.
\]
Finally, making use of Theorem \ref{thm:performance-est}(1), we have
\begin{eqnarray*}
|f(x_{k})-f(x^{*})| & \leq & |f(x^{*})-f(x_{k}^{*})|+|f(x_{k})-f(x_{k}^{*})|\\
 & \leq & \|f^{\prime}(x^{*})\|d(x_{k}^{*},C)+M\|x_{k}-x_{k}^{*}\|\\
 & \leq & \|f^{\prime}(x^{*})\|d(x_{k}^{*},C)+Md(x_{k}^{*},C)\\
 & \leq & \epsilon.
\end{eqnarray*}

\end{proof}
So we must have $d(x_{j}^{*},C)>\frac{\epsilon}{\|f^{\prime}(x^{*})\|+M}$
for all $j\in\{1,\dots,k-1\}$. For the outer iterations $j\in\{1,\dots,k-1\}$,
Remark \ref{rem:upp-bdd-on-iter} imposes that the number of inner
iterations needs to allow us to get $\|x_{j}-x_{j}^{*}\|\leq\frac{\epsilon}{3\kappa[\|f^{\prime}(x^{*})\|+M]}$.
So the number of inner iterations for outer iterate $j\in\{1,\dots,k-1\}$
needs to be at least $O(\log(1/\epsilon))$, which is less than the
$O(\log(1/\epsilon^{2}))$ obtained earlier. So the total number of
inner iterations in outer iterations $j\in\{1,\dots,k-1\}$ that is
needed to get $|f(x_{k})-f(x^{*})|\leq\epsilon$ is $O(\frac{1}{\epsilon}\log(1/\epsilon^{2}))$.
The corresponding number of inner iterations to get $\|x_{k}-x^{*}\|\leq\epsilon$
can be similarly calculated to be $O(\frac{1}{\epsilon^{2}}\log(1/\epsilon^{4}))$.

\section{\label{sec:effectiveness}Lower bounds on effectiveness of projection
algorithms }

In this section, we derive a lower bound that describes the absolute
rate convergence of first order algorithms where one projects onto
component sets to explore the feasible set. Let $f:\mathbb{R}^{n}\to\mathbb{R}$
be a convex function. When \eqref{eq:main-pblm} is restricted to
the case where $f_{j}(x)$ is an affine function and $Q=\mathbb{R}^{n}$,
we have the following problem 
\begin{eqnarray}
 & \min & f(x)\label{eq:lower-bdd-pblm}\\
 & \mbox{s.t.} & x\in\cap_{j=1}^{m}H_{j}\nonumber \\
 &  & x\in\mathbb{R}^{n},\nonumber 
\end{eqnarray}
where $H_{j}$ are halfspaces. In the case where $m$ and $n$ are
large, only first order algorithms are capable of handling the large
size of the problems. So absolute bounds rather than asymptotic bounds
are more appropriate for the analysis of the speed of convergence
of the algorithms. Motivated by the analysis in \cite{Nesterov_book},
we consider the following algorithm.
\begin{algorithm}
\label{alg:analyze-lower-bdd}(Algorithm to analyze \eqref{eq:lower-bdd-pblm})
Suppose in \eqref{eq:lower-bdd-pblm}, we have the following algorithm.
Let $x_{0}$ be a starting iterate.

01 Set $S_{0}=\emptyset$.

02 For iteration $k\geq1$

03 $\quad$ Find $i_{k}\in\{1,\dots,m\}\backslash S_{k-1}$, and set
$S_{k}=S_{k-1}\cup\{i_{k}\}$.

04 $\quad$ Find objective value $f_{k}$ of $\min\{f(x):x\in\cap_{j\in S_{k}}H_{j}\}$.

05 End for.
\end{algorithm}
A lower bound on the absolute rate of convergence of Algorithm \ref{alg:analyze-lower-bdd}
would give an absolute bound on how algorithms that explore the feasible
set by projection can converge. 

We look in particular at the problem 
\begin{eqnarray}
 & \min & \|e_{1}-x\|_{p}^{p}\label{eq:model-lower-bdd}\\
 & \mbox{s.t.} & \langle[e_{1}+\epsilon e_{j+1}],x\rangle\leq0\mbox{ for }j\in\{1,\dots,n-1\}\nonumber \\
 &  & x\in\mathbb{R}^{n},\nonumber 
\end{eqnarray}
where $\|\cdot\|_{p}$ is the usual $p$ norm defined by $\|x\|_{p}^{p}=\sum_{i=1}^{n}x_{i}^{p}$,
and $e_{i}$ are the elementary vectors with $1$ on the $i$th component
and $0$ everywhere else. We also restrict $p$ to be a positive even
integer, so that the objective function is seen to be convex. 

First, we prove that the constraints satisfy the linear metric inequality.
\begin{prop}
(Linear metric inequality in \eqref{eq:model-lower-bdd}) The sets
in the constraints of \eqref{eq:model-lower-bdd} satisfy the linear
metric inequality. \end{prop}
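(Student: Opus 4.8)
The plan is to read everything off the affine structure of the constraints and then reduce to a one-dimensional error bound. First I would observe that each constraint function $f_j(x):=\langle e_1+\epsilon e_{j+1},x\rangle=x_1+\epsilon x_{j+1}$ is affine, so its only subgradient is the constant vector $a_j:=e_1+\epsilon e_{j+1}$, and the halfspace attached to it in Definition \ref{def:lin-metric-ineq} is always the fixed halfspace $H_j=\{y:\langle a_j,y\rangle\le 0\}$, with $d(x,H_j)=(x_1+\epsilon x_{j+1})_+/\sqrt{1+\epsilon^2}$. Writing $t:=\max_{2\le i\le n}x_i$, this gives the closed forms $\max_{1\le j\le n-1}d(x,H_j)=(x_1+\epsilon t)_+/\sqrt{1+\epsilon^2}$ and $C=\bigcap_j H_j=\{x:x_1+\epsilon t\le 0\}$. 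If $x\in C$ then \eqref{eq:gen-lin-metric-ineq} is trivial, so it only remains to bound $d(x,C)$ from above by a constant multiple of $(x_1+\epsilon t)$ when $x_1+\epsilon t>0$.

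For this step there are two natural routes. The quick one is to invoke Hoffman's error bound for systems of linear inequalities: $C$ is a nonempty polyhedron cut out by finitely many halfspaces, so $d(x,C)\le\gamma\,\max_{1\le j\le n-1}(\langle a_j,x\rangle)_+=\gamma\,(x_1+\epsilon t)_+$ for a constant $\gamma$ depending only on the matrix $[a_1,\dots,a_{n-1}]$, and since the right-hand side is $\gamma\sqrt{1+\epsilon^2}$ times $\max_j d(x,H_j)$, the linear metric inequality follows with $\kappa=\gamma\sqrt{1+\epsilon^2}$. If one prefers an explicit constant, I would instead compute the projection $y=P_C(x)$ directly from its KKT system: with multipliers $\lambda_i\ge 0$ for the constraints $x_1+\epsilon x_i\le 0$, $i\ge 2$, one gets $y=x-\sum_i\lambda_i(e_1+\epsilon e_i)$, hence $y_1=x_1-\Lambda$ and $y_i=x_i-\epsilon\lambda_i$, where $\Lambda:=\sum_i\lambda_i$; complementary slackness forces $\epsilon^2\lambda_i=(x_1+\epsilon x_i)-\Lambda$ on the active set $B$, and summing over $B$ gives $\Lambda=\big(\sum_{i\in B}(x_1+\epsilon x_i)\big)/(\epsilon^2+|B|)$. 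Since $\lambda_i>0$ forces $0<\Lambda<x_1+\epsilon x_i\le x_1+\epsilon t$ for $i\in B$, one obtains $\Lambda\le x_1+\epsilon t$ and $\lambda_i\le(x_1+\epsilon t)/\epsilon^2$, so $d(x,C)^2=\Lambda^2+\epsilon^2\sum_{i\in B}\lambda_i^2\le\Lambda^2+\epsilon^2\Lambda\max_i\lambda_i\le 2(x_1+\epsilon t)^2$, which yields the linear metric inequality with $\kappa=\sqrt{2(1+\epsilon^2)}$.

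The only genuine subtlety is the reason one cannot simply project onto a single halfspace $H_{\bar j}$ realizing the maximum: even when $t$ is attained at a unique index, several of the constraints can be active at $P_C(x)$, so the projection onto $C$ genuinely sees the whole active set $B$. The bookkeeping above on $\Lambda$ and the $\lambda_i$ (or, equivalently, the appeal to Hoffman's lemma) is exactly what handles this; everything else in the argument is routine arithmetic.
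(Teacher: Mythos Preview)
Your argument is correct. Both routes work: Hoffman's lemma applies verbatim since $C$ is a nonempty polyhedron, and your KKT computation is sound (taking $B=\{i:\lambda_i>0\}$, the identity $\epsilon^2\lambda_i=(x_1+\epsilon x_i)-\Lambda$ on $B$ and the bound $\sum_{i\in B}\lambda_i^2\le\Lambda\max_i\lambda_i$ give exactly the estimate you state).

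The paper, however, takes a different and more abstract route: it observes that the unit normals $\tfrac{1}{\sqrt{1+\epsilon^2}}(e_1+\epsilon e_{j+1})$ all share the same first coordinate $\tfrac{1}{\sqrt{1+\epsilon^2}}$, so the origin is at distance at least $\tfrac{1}{\sqrt{1+\epsilon^2}}$ from their convex hull, and then invokes Kruger's regularity results (\cite{Kruger_06}) linking this quantity to linear metric inequality. Your approaches are more self-contained and, in the KKT version, yield an explicit constant $\kappa=\sqrt{2(1+\epsilon^2)}$ rather than one implicit in an external theorem; the paper's route, on the other hand, is quicker to state and highlights the geometric reason the inequality holds (the normals are uniformly ``spread away'' from the origin), which is the viewpoint more in line with the regularity literature the paper draws on elsewhere.
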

\begin{proof}
The unit normals of each halfspace is $\frac{1}{\sqrt{1+\epsilon^{2}}}[e_{1}+\epsilon e_{j+1}]$
for each $j\in\{1,\dots,n-1\}$. The distance from the origin to the
convex hull of these unit normals is at least $\frac{1}{\sqrt{1+\epsilon^{2}}}$.
We can make use of the results in \cite{Kruger_06} for example, which
contain what we need. (In fact, much more than what we need.) In the
notation of that paper, linear metric inequality follows from establishing
$\hat{\vartheta}>0$ given $\eta>0$. Theorems 1(i) and 2(ii) there
give $\hat{\vartheta}=\hat{\theta}$ and $\eta\leq\frac{\hat{\theta}}{1-\hat{\theta}}$
respectively. These imply $\hat{\vartheta}=\hat{\theta}\geq\frac{\eta}{1+\eta}>0$. 
\end{proof}
When Algorithm \ref{alg:analyze-lower-bdd} is applied to \eqref{eq:model-lower-bdd},
the symmetry of the problem implies that we can take $S_{k}=\{1,\dots,k\}$.
We now calculate the objective value when $k$ of the constraints
in \eqref{eq:model-lower-bdd} are considered.
\begin{prop}
(Calculating $f_{k}$) In \eqref{eq:model-lower-bdd}, let $p$ be
any positive even integer $p$. Let $f_{k}$ be the optimal value
of \eqref{eq:model-lower-bdd} when only $k$ of the $n-1$ constraints
are taken into account. We have 
\[
f_{k}=\frac{k\theta}{\left[1+(k\theta)^{1/(p-1)}\right]^{p-1}},\mbox{ where }\theta=\epsilon^{-p}.
\]
\end{prop}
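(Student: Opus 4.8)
The plan is to use the reduction, noted just before the statement, that one may take $S_k=\{1,\dots,k\}$, so that $f_k$ is the optimal value of
\begin{eqnarray*}
 & \min & (1-x_1)^p+\sum_{i=2}^{n}x_i^p\\
 & \mbox{s.t.} & x_1+\epsilon x_i\leq0\mbox{ for }i\in\{2,\dots,k+1\},
\end{eqnarray*}
where I have used that $p$ is even to drop the absolute values. First I would observe that, once $x_1$ is held fixed, the residual minimization separates coordinatewise: the coordinates $x_{k+2},\dots,x_n$ are unconstrained and are minimized at $0$, while each of $x_2,\dots,x_{k+1}$ ranges over $x_i\leq-x_1/\epsilon$ and, because $t\mapsto t^p$ is even and increasing in $|t|$, is minimized at $x_i=-x_1/\epsilon$ as long as $-x_1/\epsilon\leq0$, i.e.\ as long as $x_1\geq0$. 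This collapses the problem to the one-variable minimization of $g(x_1):=(1-x_1)^p+k\theta\,x_1^p$ over $x_1\geq0$, with $\theta=\epsilon^{-p}$.

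Next I would remove the sign restriction. Since $p$ is even, $g$ is (strictly) convex on all of $\mathbb{R}$, because $g''$ is a sum of two nonnegative terms that do not vanish simultaneously; moreover $g'(0)=-p<0$ and $g'(1)=k\theta p>0$, so the unconstrained minimizer $x_1^*$ lies in $(0,1)$, and $g(x_1^*)<g(0)=1$. On the other hand, for $x_1<0$ one has $-x_1/\epsilon>0$, so the residual coordinates can all be taken to be $0$ and the best attainable objective is $(1-x_1)^p>1$. Hence negative $x_1$ is never optimal and $f_k=\min_{x_1\in\mathbb{R}}g(x_1)=g(x_1^*)$.

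The first-order condition $g'(x_1)=0$ reads $k\theta\,x_1^{p-1}=(1-x_1)^{p-1}$; taking positive $(p-1)$-st roots (legitimate once one knows $x_1^*\in(0,1)$) gives $(k\theta)^{1/(p-1)}x_1=1-x_1$, so $x_1^*=\bigl[1+(k\theta)^{1/(p-1)}\bigr]^{-1}$ and $1-x_1^*=(k\theta)^{1/(p-1)}\bigl[1+(k\theta)^{1/(p-1)}\bigr]^{-1}$. Substituting back,
\[
f_k=(1-x_1^*)^p+k\theta\,(x_1^*)^p=\frac{(k\theta)^{p/(p-1)}+k\theta}{\bigl[1+(k\theta)^{1/(p-1)}\bigr]^p}=\frac{k\theta\bigl[(k\theta)^{1/(p-1)}+1\bigr]}{\bigl[1+(k\theta)^{1/(p-1)}\bigr]^p}=\frac{k\theta}{\bigl[1+(k\theta)^{1/(p-1)}\bigr]^{p-1}},
\]
using $(k\theta)^{p/(p-1)}=k\theta\cdot(k\theta)^{1/(p-1)}$ in the last two steps. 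The only part that needs genuine care is the reduction step — verifying that the constrained coordinates sit at the boundary and the unconstrained ones vanish at optimality — but this is immediate from the separability of the objective in the coordinates once $x_1$ is fixed, and everything after that is routine single-variable calculus.
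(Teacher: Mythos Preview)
Your proof is correct and follows essentially the same route as the paper: reduce to the one-variable problem of minimizing $(1-x_1)^p+k\theta\,x_1^p$ and solve via the first-order condition. The paper reaches the same reduction by invoking symmetry of the minimizer and asserting that all constraints are tight, whereas you fix $x_1$ and use coordinatewise separability, with a more careful treatment of the sign of $x_1$; the final calculus is identical.
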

\begin{proof}
The function $x\mapsto\|e_{1}-x\|_{p}^{p}$ is seen to be strictly
convex, so there is a unique minimizer. Let $\bar{x}$ be the minimizer
of the $k$th subproblem. The symmetry of the problem implies that
the second to $(k+1)$th component of $\bar{x}$ have the same value,
say $\beta$, and the $(k+2)$th to $n$th component of $\bar{x}$
are zero. Moreover, all the inequality constraints are tight. Let
the first component have the value $\alpha$. We now see that $f_{k}$
equals the objective value of the following problem
\begin{eqnarray*}
f_{k}= & \min_{(\alpha,\beta)} & (1-\alpha)^{p}+k\beta^{p}\\
 & \mbox{s.t.} & \alpha+\epsilon\beta=0.
\end{eqnarray*}
We have $\beta=-\frac{1}{\epsilon}\alpha$. Let $\tilde{\theta}=k\theta$.
We have 
\[
f_{k}=\min_{\alpha}(1-\alpha)^{p}+\tilde{\theta}\alpha^{p}.
\]
The derivative of the above function with respect to $\alpha$ equals
\[
p(1-\alpha)^{p-1}+\tilde{\theta}p\alpha^{p-1}.
\]
Setting the above to zero gives us 
\begin{eqnarray*}
\left(\frac{\alpha-1}{\alpha}\right)^{p-1} & = & \tilde{\theta}\\
\frac{1-\alpha}{\alpha} & = & \tilde{\theta}^{1/p-1}\\
\alpha(1+\tilde{\theta}^{1/p-1}) & = & 1\\
\alpha & = & \frac{1}{1+\tilde{\theta}^{1/p-1}}.
\end{eqnarray*}
This gives us 
\begin{eqnarray*}
f_{k} & = & (1-\alpha)^{p}+\tilde{\theta}\alpha^{p}\\
 & = & \left(\frac{\tilde{\theta}^{1/p-1}}{1+\tilde{\theta}^{1/p-1}}\right)^{p}+\theta\left(\frac{1}{1+\tilde{\theta}^{1/p-1}}\right)^{p}\\
 & = & \frac{\tilde{\theta}^{p/(p-1)}+\tilde{\theta}}{\left[1+\tilde{\theta}^{1/p-1}\right]^{p}}\\
 & = & \frac{\tilde{\theta}}{\left[1+\tilde{\theta}^{1/p-1}\right]^{p-1}}
\end{eqnarray*}
which is what we need. 
\end{proof}
One easy thing to see is that as $k\to\infty$, we have $f_{k}=1$.
This also means that if we make $n$ arbitrarily large, the objective
value converges to $1$. By the binomial theorem, we can calculate
that the leading term of $1-f_{k}$ is 
\[
\frac{(p-1)(k\theta)^{(p-2)/(p-1)}}{\left[1+(k\theta)^{1/(p-1)}\right]^{p-1}}.
\]
This leading term converges to zero at $k\to\infty$ at the rate of
$\Theta(\frac{1}{k^{1/(p-1)}})$, while the other terms converge to
zero at a faster rate. 

Two conclusions can be made with the example presented in this section.
\begin{itemize}
\item The case of $p=2$ gives a convergence rate of $O(\frac{1}{k})$ for
the objective value. This suggests that the methods presented for
strongly convex objective functions in Section \ref{sec:gen-Haugazeau}
are the best possible up to a constant, that the methods in Section
\ref{sec:gen-constrained-opt} are close to the best possible. 
\item The case of $p$ being an arbitrarily large even number gives a convergence
rate of $O(\frac{1}{k^{1/(p-1)}})$. This suggests that if the objective
function is not strongly convex, it would be more sensible to use
the subgradient algorithm (Algorithm \ref{alg:subgradient}) to solve
\eqref{eq:lower-bdd-pblm} instead.
\end{itemize}

\section{\label{sec:Behavior-Haugazeau}Lower bounds on rate of Haugazeau's
algorithm}

In this section, we give two examples in separate subsections to show
the behavior of Haugazeau's algorithm. The first example shows the
$O(1/k)$ convergence rate of the objective value in the case of the
intersection of two halfspaces. This suggests that the convergence
rate of $O(1/k)$ is typical. The second example shows that Haugazeau's
algorithm converges arbitrarily slowly in a convex problem when the
linear metric inequality is not satisfied. 

The lemma below will be used for both examples.
\begin{lem}
\label{lem:lower-bdd-rate}(Lower bound of convergence of a sequence)
Let $p\geq1$. Suppose $\{\alpha_{k}\}_{k=1}^{\infty}$ is a strictly
decreasing sequence of real numbers converging to zero, and there
is some $\gamma>0$ such that $\alpha_{k+1}\geq\alpha_{k}(1-\gamma\alpha_{k}^{p})$
for all $k$. Then we can find a constant $M_{2}\geq0$ such that
$\alpha_{k}\geq\frac{1}{^{p}\sqrt{2p\gamma(k+M_{2})}}$ for all $k\geq1$.\end{lem}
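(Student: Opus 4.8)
The plan is to pass to the reciprocal powers $\beta_k := \alpha_k^{-p}$. Since $\{\alpha_k\}$ is strictly decreasing with limit $0$, every $\alpha_k$ is positive, the sequence $\{\beta_k\}$ is strictly increasing, and $\beta_k \to +\infty$. Moreover, raising $\alpha_k \ge 1/(2p\gamma(k+M_2))^{1/p}$ to the power $-p$ shows that the desired conclusion is equivalent to the linear upper bound $\beta_k \le 2p\gamma(k+M_2)$ for all $k\ge1$, so it suffices to prove this.

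Next I would rewrite the recursion in terms of $\beta_k$. Because $\alpha_k\to0$, there is an index $K$ with $\gamma\alpha_k^p = \gamma/\beta_k \le \tfrac{1}{2(p+1)}$ for all $k\ge K$; in particular $1-\gamma\alpha_k^p>0$ there, so applying the decreasing map $t\mapsto t^{-p}$ to the hypothesis $\alpha_{k+1}\ge\alpha_k(1-\gamma\alpha_k^p)$ yields
\[
\beta_{k+1} \le \beta_k\bigl(1-\gamma/\beta_k\bigr)^{-p}\qquad\text{for all }k\ge K.
\]
The elementary inequality I need is $(1-x)^{-p}\le 1+2px$ for $x\in[0,\tfrac{1}{2(p+1)}]$. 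I would establish it by setting $g(x):=(1+2px)(1-x)^p$, checking $g(0)=1$, and computing $g'(x)=p(1-x)^{p-1}\bigl[1-2(p+1)x\bigr]\ge0$ on that interval, so $g\ge1$ there. Substituting $x=\gamma/\beta_k$ then gives the clean one-step bound $\beta_{k+1}\le\beta_k+2p\gamma$ for $k\ge K$, whence by induction $\beta_k\le\beta_K+2p\gamma(k-K)$ for all $k\ge K$.

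Finally I would absorb everything into the constant. For $k\ge K$ the last estimate reads $\beta_k\le 2p\gamma\bigl(k+\tfrac{\beta_K}{2p\gamma}-K\bigr)$, while for the finitely many $k<K$ each $\beta_k$ is a fixed finite positive number. Hence setting
\[
M_2:=\max\Bigl(\{0\}\cup\bigl\{\tfrac{\beta_k}{2p\gamma}-k : 1\le k\le K\bigr\}\Bigr)\ \ge\ 0
\]
makes $\beta_k\le 2p\gamma(k+M_2)$ hold for every $k\ge1$ (for $k\le K$ directly from the definition of $M_2$, for $k>K$ from the inductive bound together with $\tfrac{\beta_K}{2p\gamma}-K\le M_2$). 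Taking $p$-th roots gives the stated inequality. The only genuinely delicate point is the auxiliary inequality and the right choice of threshold $\tfrac{1}{2(p+1)}$: one must use the constant $2p$ (not $p$) in $(1-x)^{-p}\le 1+2px$ precisely so that $g'$ is nonnegative on an honest interval around $0$; the handling of the initial segment $k<K$, where the recursion gives no control, is then routine.
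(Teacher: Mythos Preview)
Your argument is correct. The substitution $\beta_k=\alpha_k^{-p}$ is legitimate because every $\alpha_k>0$, your auxiliary inequality $(1-x)^{-p}\le 1+2px$ on $[0,\tfrac{1}{2(p+1)}]$ is verified exactly as you say (the derivative computation $g'(x)=p(1-x)^{p-1}[1-2(p+1)x]$ is right), and the telescoping together with the finite-segment absorption into $M_2$ is sound.

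The paper takes a different route: it works directly with $\alpha_k$ and proves by induction that $\alpha_k\ge(2p\gamma(k+M_2))^{-1/p}$ implies the same at $k+1$. To push the induction through it needs the map $\alpha\mapsto\alpha(1-\gamma\alpha^p)$ to be monotone on the relevant interval, a Taylor-type bound $(1-\tfrac{1}{2p(k+M_2)})^p\ge 1-\tfrac{p+1}{2p(k+M_2)}$, and a quadratic inequality $(k+M_2+1)(k+M_2-\tfrac{p+1}{2p})\ge(k+M_2)^2$, all arranged by taking $M_2$ large. Your reciprocal-power substitution converts the multiplicative recursion into an additive one, $\beta_{k+1}\le\beta_k+2p\gamma$, which makes the telescoping immediate and sidesteps the monotonicity check entirely; the price is the sharper elementary inequality with the carefully chosen threshold $\tfrac{1}{2(p+1)}$. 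Both proofs ultimately rest on a linear control of $(1-x)^{\pm p}$ near $0$, but your organization is the cleaner of the two.
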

\begin{proof}
By Taylor's Theorem on the function $f(x):=(1-x)^{p}$, we can choose
$M_{2}$ large enough so that 
\begin{equation}
\left[1-\frac{1}{2p(k+M_{2})}\right]^{p}\geq1-\frac{p+1}{2p(k+M_{2})}\mbox{ for all }k\geq0.\label{eq:Taylor}
\end{equation}
We can increase $M_{2}$ if necessary so that 
\begin{enumerate}
\item $(k+M_{2}+1)(k+M_{2}-\frac{p+1}{2p})\geq(k+M_{2})^{2}$ for all $k\geq0$, 
\item $\alpha_{1}\geq\frac{1}{^{p}\sqrt{2p\gamma(1+M_{2})}}$, and 
\item the map $\alpha\mapsto\alpha(1-\gamma\alpha^{p})$ is strictly increasing
in the interval $[0,\frac{1}{^{p}\sqrt{2p\gamma(1+M_{2})}}]$. 
\end{enumerate}
We now show that $\alpha_{i}\geq\frac{1}{^{p}\sqrt{2p\gamma(k+M_{2})}}$
implies $\alpha_{i+1}\geq\frac{1}{^{p}\sqrt{2p\gamma(k+M_{2}+1)}}$
for all $k\geq1$, which would complete our proof. Now, making use
of the fact that $\{\alpha_{k}\}$ is strictly decreasing and (3),
we have 
\[
\alpha_{k+1}\geq\alpha_{k}(1-\gamma\alpha_{k}^{p})\geq\frac{1}{^{p}\sqrt{2p\gamma(k+M_{2})}}\left[1-\frac{1}{2p(k+M_{2})}\right].
\]
Combining \eqref{eq:Taylor} and (1) gives 
\[
(k+M_{2}+1)\left[1-\frac{1}{2p(k+M_{2})}\right]^{p}\geq(k+M_{2}+1)\left[1-\frac{p+1}{2p(k+M_{2})}\right]\geq k+M_{2}.
\]
A rearrangement of the above inequality gives 
\[
\alpha_{k+1}\geq\alpha_{k}(1-\gamma\alpha_{k}^{p})\geq\frac{1}{^{p}\sqrt{2p\gamma(k+M_{2})}}\left[1-\frac{1}{p(k+M_{2})}\right]\geq\frac{1}{^{p}\sqrt{2p\gamma(k+M_{2}+1)}},
\]
which is what we need.
\end{proof}

\subsection{\label{sub:2-hlfsp}The case of two halfspaces}

Let $\theta\in\mathbb{R}$ be such that $0<\theta<\pi/2$. Consider
the problem of projecting the point $x_{0}=(1,0)\in\mathbb{R}^{2}$
onto $H_{+}\cap H_{-}$, where $H_{+}$ and $H_{-}$ are halfspaces
in $\mathbb{R}^{2}$ defined by 
\[
H_{\pm}:=\{(u,v)\in\mathbb{R}^{2}:\pm v\geq u/\tan\theta\}.
\]
See Figure \ref{fig:eg-2-hlfsp}. It is clear that $P_{H_{+}\cap H_{-}}(x_{0})=(0,0)$.
We let $\mathbf{0}:=(0,0)$ to simplify notation. Haugazeau's algorithm
would be able to discover the two halfspaces in two steps and solve
the problem by quadratic programming. But suppose that somehow we
have an iterate $x_{1}$ that lies on the boundary of $H_{+}$ that
is close to $\mathbf{0}$. A similar situation arises in projecting
a point onto the intersection of many halfspaces for example. An analysis
of this modified problem gives us an indication of how Haugazeau's
algorithm can perform for larger problems.

\begin{figure}[!h]
\includegraphics[scale=0.25]{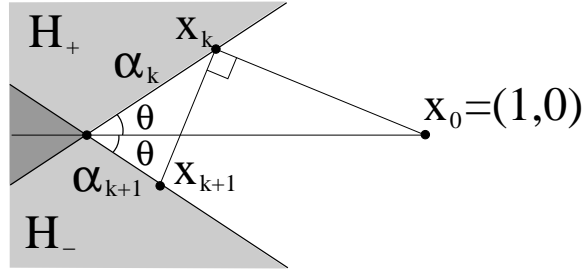}

\caption{\label{fig:eg-2-hlfsp}Illustration of example in Subsection \ref{sub:2-hlfsp}. }
\end{figure}

For our modified problem, the iterates $x_{i}$ would lie on the boundary
of either $H_{+}$ or $H_{-}$. For the iterate $x_{k}$, let $\alpha_{k}$
be the distance $\|x_{k}-(0,0)\|$. This is marked on Figure \ref{fig:eg-2-hlfsp}.
The cosine rule gives us the following equations. \begin{subequations}\label{eq-block:cos-rule}
\begin{eqnarray}
\|x_{k}-x_{k+1}\|^{2} & = & \alpha_{k}^{2}+\alpha_{k+1}^{2}-2\alpha_{k}\alpha_{k+1}\cos2\theta\label{eq:cos-rule1}\\
\|x_{0}-x_{k}\|^{2} & = & \alpha_{k}^{2}+1-2\alpha_{k}\cos\theta\label{eq:cos-rule2}\\
\|x_{0}-x_{k+1}\|^{2} & = & \alpha_{k+1}^{2}+1-2\alpha_{k+1}\cos\theta.\label{eq:cos-rule3}
\end{eqnarray}
\end{subequations}Pythagoras's theorem gives us $\|x_{k}-x_{k+1}\|^{2}+\|x_{0}-x_{k}\|^{2}=\|x_{0}-x_{k+1}\|^{2}$.
Together with the above equations, we have 
\begin{eqnarray*}
\alpha_{k}^{2}-2\alpha_{k}\alpha_{k+1}\cos2\theta-2\alpha_{k}\cos\theta & = & -2\alpha_{k+1}\cos\theta\\
\alpha_{k+1}[\cos\theta-\alpha_{k}\cos2\theta] & = & -\alpha_{k}^{2}+\alpha_{k}\cos\theta\\
\alpha_{k+1} & = & \alpha_{k}\frac{\cos\theta-\alpha_{k}}{\cos\theta-\alpha_{k}\cos2\theta}\\
 & = & \alpha_{k}\left(1-\alpha_{k}\frac{1-\cos2\theta}{\cos\theta-\alpha_{k}\cos2\theta}\right).
\end{eqnarray*}
Since $\{\alpha_{k}\}$ is a strictly decreasing positive sequence
which converges to zero, we have $\alpha_{k}\geq\alpha_{k}(1-\alpha_{k}\gamma)$
for all $k$ large enough, where $\gamma=\frac{1-\cos2\theta}{2\cos\theta}$.
By Lemma \ref{lem:lower-bdd-rate}, the convergence of $\{\|x_{k}-P_{H_{+}\cap H_{-}}(x_{0})\|\}$
to zero is at best $O(1/k)$. 

Let $f_{k}=\|x_{0}-x_{k}\|^{2}$. To see the rate of how $f_{k}$
converges to $1$, we note from \eqref{eq:cos-rule2} that $1-f_{k}=2\alpha_{k}\cos\theta-\alpha_{k}^{2}$.
Then the convergence rate of $f_{k}$ to $1$ is of $\Theta(1/k)$.

\subsection{The case of no linear metric inequality}

Let $p\geq1$ be some parameter. Consider the problem of projecting
the point $(1,0)\in\mathbb{R}^{2}$ onto the intersection of the sets
$C_{+}\cap C_{-}$, where 
\[
C_{\pm}=\{(u,v)\in\mathbb{R}^{2}:\pm v\geq|u|^{p}\}.
\]
The diagram for this problem is similar to that of the one in Subsection
\ref{sub:2-hlfsp}. The linear metric inequality is not satisfied
in this case. It is clear that the projection of $(1,0)$ onto $C_{+}\cap C_{-}$
is $(0,0)$. We try to show that the parameter $p$ can be made arbitrarily
large, so that the convergence of the iterates $x_{k}$ to $\mathbf{0}=(0,0)$
is arbitrarily slow. We let $x_{k}=(u_{k},v_{k})$. 
\begin{prop}
\label{prop:iters-outside-region}The iterates $x_{k}$ satisfy
\begin{equation}
x_{k}\notin\intr(C_{+})\cup\intr(C_{-}).\label{eq:intr-cond}
\end{equation}
\end{prop}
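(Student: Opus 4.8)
The plan is to prove by induction on $k$ that $x_k=(u_k,v_k)$ satisfies $|v_k|\le|u_k|^p$; this is precisely \eqref{eq:intr-cond}, since $\mathbb{R}^2\setminus(\intr(C_+)\cup\intr(C_-))=\{(u,v):|v|\le|u|^p\}=:B$. The base case is immediate because $x_0=(1,0)$ gives $|v_0|=0\le1=|u_0|^p$. Throughout one works with $f(x)=\frac{1}{2}\|x-x_0\|^2$ and $f_\pm(\cdot)=d(\cdot,C_\pm)$, so that Algorithm \ref{alg:gen-Haugazeau} is exactly Haugazeau's algorithm, and, using the formula for $\partial d(\cdot,C_j)$ noted just after Definition \ref{def:lin-metric-ineq}, the halfspace $H_{j,k}$ is the supporting halfspace of $C_j$ whose bounding line supports $\partial C_j$ at $P_{C_j}(x_k)$.

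For the inductive step I would first record the facts that come for free. Both $H_k^\circ$ and $H_k^+$ contain $C=C_+\cap C_-=\{\mathbf{0}\}$; $x_{k+1}=P_{H_k^\circ\cap H_k^+}(x_0)$; and $H_k^\circ=\{x:\langle x_0-x_k,x-x_k\rangle\le0\}$, whence $x_0\notin H_k^\circ$, $P_{H_k^\circ}(x_0)=x_k$, and $\mathbf{0}\in H_k^\circ$ gives the Haugazeau invariant $u_k^2+v_k^2\le u_k$ (so $0\le u_k\le1$). The index $\bar{j}\in\{+,-\}$ chosen at step $k$ has $d(x_k,C_{\bar{j}})=\max\{d(x_k,C_+),d(x_k,C_-)\}$, which is positive when $x_k\neq\mathbf{0}$; hence $x_k\notin C_{\bar{j}}$, so $x_k\notin H_k^+$, so $x_{k+1}\neq P_{H_k^\circ}(x_0)$, and therefore $x_{k+1}$ must lie on the bounding line $\partial H_k^+$. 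Now $\partial H_k^+$ supports $C_{\bar{j}}$; since $\partial C_+$ is the graph of the convex function $u\mapsto|u|^p$ and $\partial C_-$ the graph of the concave function $u\mapsto-|u|^p$, such a supporting line lies entirely in the complement of $\intr(C_{\bar{j}})$. Thus $x_{k+1}\notin\intr(C_{\bar{j}})$, which settles one of the two interiors.

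The crux is to show $x_{k+1}\notin\intr(C_{\bar{j}'})$ for the other index $\bar{j}'$. The supporting line $\partial H_k^+$ does eventually pass into $\intr(C_{\bar{j}'})$ far from its contact point $y_k:=P_{C_{\bar{j}}}(x_k)$, but the halfspace $H_k^\circ$ --- through $x_k$ with outward normal $x_k-x_0$ --- is exactly what cuts that portion off, and $x_{k+1}\in H_k^\circ$. To turn this into a proof I would reduce to $\bar{j}=-$ by the symmetry $v\mapsto-v$, locate $y_k$ on $\partial C_-$ using the inductive bound $|v_k|\le|u_k|^p$ together with the invariant $u_k^2+v_k^2\le u_k$, and then compute $x_{k+1}$ (the foot of the perpendicular from $x_0$ to $\partial H_k^+$, or the corner $\partial H_k^\circ\cap\partial H_k^+$) from the cosine and Pythagoras relations among $x_0,x_k,y_k,x_{k+1}$ --- just as in Subsection \ref{sub:2-hlfsp}. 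This yields an explicit recursion for $(u_k,v_k)$ from which $|v_{k+1}|\le|u_{k+1}|^p$ has to be verified. I expect that verification to be the main obstacle: it is the one place where the exact shape $v=\pm|u|^p$ of the curves enters, and since $B$ degenerates to a very thin bow-tie near $\mathbf{0}$ the inequality is tight, so the signs (in particular whether $x_{k+1}$ lands above or below the $u$-axis) must be tracked with care. Once the induction closes, $x_k\in B$ for all $k$, which is \eqref{eq:intr-cond}.
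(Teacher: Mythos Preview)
Your setup and the easy half are correct and match the paper: once $x_{k+1}\in\partial H_k^+$ and $\partial H_k^+$ supports the convex set $C_{\bar j}$, you get $x_{k+1}\notin\intr(C_{\bar j})$ immediately.

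The gap is in the hard half. You propose to locate $y_k=P_{C_{\bar j}}(x_k)$, then compute $x_{k+1}$ via cosine/Pythagoras relations and verify $|v_{k+1}|\le|u_{k+1}|^p$ directly. But the projection of a point onto the curve $v=-|u|^p$ is governed by the normal-line condition, a polynomial equation of degree $2p-1$ with no closed form for general $p$; the ``explicit recursion for $(u_k,v_k)$'' you are aiming for simply does not exist. Any argument along these lines would have to proceed entirely through inequalities on implicitly defined quantities, and the final verification $|v_{k+1}|\le|u_{k+1}|^p$ is exactly as delicate as you fear --- you have not indicated how you would close it.

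The paper sidesteps all of this with a purely geometric observation, and you have already written down the key ingredient without exploiting it. Your ``Haugazeau invariant'' $u_k^2+v_k^2\le u_k$ says precisely that $x_k$ lies in the closed disk on diameter $[\mathbf 0,x_0]$, i.e.\ $\angle x_0x_k\mathbf 0\ge\pi/2$. The paper observes that (for $u$ below the first iterate's abscissa) every point in the upper half-plane lying outside $\intr(C_+)$ automatically satisfies this same angle condition; together with $u_{k+1}<u_k$ and the fact that $x_{k+1}$ sits on the line $\partial H_k^\circ$ through $x_k$, this forces $x_{k+1}\notin\intr(C_+)$ without any coordinate computation. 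So the missing idea is to use your disk invariant geometrically rather than to chase a recursion.
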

\begin{proof}
This is easily seen to be true for $k=1$. We now prove that \eqref{eq:intr-cond}
holds for all $k$ by induction. Without loss of generality, suppose
that for iterate $x_{k}$, its second coordinate $v_{k}$ is positive.
The next iterate $x_{k+1}$ is the intersection of the line passing
through $x_{k}$ perpendicular to $x_{0}-x_{k}$ and a supporting
hyperplane of $C_{-}$. It is therefore clear that $x_{k+1}\notin\intr(C_{-})$.
We also see that $u_{k+1}<u_{i}$. From the convexity of $C_{+}$,
if a point $x=(u,v)$ is such that $v>0$, $u<u_{1}$ and $x\notin\intr(C_{+})$,
then $\angle x_{0}x\mathbf{0}>\pi/2$. Given that $\angle x_{0}x_{k}\mathbf{0}>\pi/2$
and $x_{k}\notin\intr(C_{+})$, we have $x_{k+1}\notin\intr(C_{+})$
as well. 
\end{proof}
Next, we bound the rate of decrease of $u_{k}$.
\begin{prop}
\label{prop:lower-bdd-u-i}Continuing the discussion in this subsection,
we have $u_{k+1}\geq u_{k}\left(1-\frac{2u_{k}^{2p-1}}{1-u_{k}+u_{k}^{2p-1}}\right)$. \end{prop}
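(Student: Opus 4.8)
The plan is to turn the geometry of the Haugazeau step into an exact recursion for $u_{k+1}$ in terms of $u_k$ and $v_k$, then to kill the dependence on $v_k$ using the bound $|v_k|\le u_k^p$ furnished by Proposition~\ref{prop:iters-outside-region}, and finally to discard the factor $p$. \textbf{First}, by the symmetry $v\mapsto -v$ we may assume $v_k>0$. As recorded in the proof of Proposition~\ref{prop:iters-outside-region}, $x_k\notin\intr(C_+)$ then gives $0<v_k\le u_k^p$, while the angle inequality $\angle x_0x_k\mathbf 0>\pi/2$ used there forces $0<u_k<1$. In Haugazeau's algorithm for projecting $x_0=(1,0)$, the halfspace $H_k^\circ$ has $x_k=P_{H_k^\circ}(x_0)$, so $\partial H_k^\circ$ is the line through $x_k$ orthogonal to $x_0-x_k$; the separating halfspace selected at $x_k$ is the first–order halfspace of $C_-$, namely $H_k^+=\{(u,v):\,p\,u_k^{p-1}u+v\le (p-1)u_k^p\}$; and $x_{k+1}$ is the corner point $\partial H_k^\circ\cap\partial H_k^+$, exactly the geometric fact invoked in the proof of Proposition~\ref{prop:iters-outside-region}. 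Eliminating $v$ from these two linear equations and substituting $x_0=(1,0)$ yields
\[
u_{k+1}\;=\;u_k-\frac{v_k\,(u_k^p+v_k)}{1-u_k+p\,u_k^{p-1}v_k},
\]
with positive denominator since $0<u_k<1$ and $v_k>0$; the same formula with $v_k$ replaced by $|v_k|$ holds when $v_k<0$.

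\textbf{Second}, I would study $g(t):=\dfrac{t(u_k^p+t)}{1-u_k+p\,u_k^{p-1}t}$ for $t\ge0$. A one–line differentiation shows the numerator of $g'(t)$ equals $(1-u_k)u_k^p+2(1-u_k)t+p\,u_k^{p-1}t^2>0$, so $g$ is increasing on $[0,\infty)$. Since $0<|v_k|\le u_k^p$, this gives $g(|v_k|)\le g(u_k^p)=\dfrac{2u_k^{2p}}{1-u_k+p\,u_k^{2p-1}}$, hence $u_{k+1}\ge u_k-\dfrac{2u_k^{2p}}{1-u_k+p\,u_k^{2p-1}}$. Because $p\ge1$ and $u_k>0$ we have $1-u_k+p\,u_k^{2p-1}\ge 1-u_k+u_k^{2p-1}>0$, and enlarging the positive denominator only shrinks the subtracted term, so
\[
u_{k+1}\;\ge\;u_k-\frac{2u_k^{2p}}{1-u_k+u_k^{2p-1}}\;=\;u_k\!\left(1-\frac{2u_k^{2p-1}}{1-u_k+u_k^{2p-1}}\right),
\]
which is the assertion.

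The step I expect to be the main obstacle is the first one: correctly identifying $H_k^\circ$ and the first–order halfspace $H_k^+$, and carrying out the elimination of $v$ without sign errors — in particular using that the corner $\partial H_k^\circ\cap\partial H_k^+$ really is the Haugazeau iterate, which is precisely the geometric observation already made in the proof of Proposition~\ref{prop:iters-outside-region}. Once the exact recursion is available, the monotonicity of $g$ and the $p\ge1$ estimate are routine.
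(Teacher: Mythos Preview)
Your proof is correct and complete, but it takes a genuinely different route from the paper's.

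The paper does \emph{not} compute the actual Haugazeau iterate. Instead it introduces an auxiliary point $\bar{x}_{k+1}$, defined as the intersection of the line through $x_k$ perpendicular to $x_k-x_0$ with the \emph{secant} line through $\mathbf{0}$ and $(u_k,-u_k^p)$ (not the tangent line you use). It then appeals to an unstated ``geometrical argument'' that $u_{k+1}\ge \bar u_{k+1}$, and in the dot-product computation silently replaces $v_k$ by $u_k^p$. Because the secant line passes through the origin, the parameter $p$ never appears in the denominator and the formula $u_k\,\frac{1-u_k-u_k^{2p-1}}{1-u_k+u_k^{2p-1}}$ drops out directly.

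Your approach is the opposite: you compute the \emph{exact} iterate $u_{k+1}=u_k-\dfrac{v_k(u_k^p+v_k)}{1-u_k+p\,u_k^{p-1}v_k}$ using the genuine Haugazeau halfspace $H_k^+$ (the tangent to $C_-$), then remove the dependence on $v_k$ via the monotonicity of $g(t)$ on $[0,u_k^p]$, and finally discard the factor $p$ in the denominator using $p\ge 1$. This is longer by two short steps, but it is fully explicit: there is no appeal to an unproved geometric comparison, and the replacement of $v_k$ by $u_k^p$ is justified rather than assumed. The paper's version is quicker to write but leaves both of those points to the reader's imagination. One small wording quibble: your sentence ``enlarging the positive denominator only shrinks the subtracted term'' reads backwards relative to what you actually do (you pass from the \emph{larger} denominator $1-u_k+p\,u_k^{2p-1}$ to the \emph{smaller} one), though the inequality you write down is the correct one.
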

\begin{proof}
We assume without loss of generality that $x_{k}=(u_{k},v_{k})$ is
such that $v_{k}>0$. By Proposition \ref{prop:iters-outside-region},
we have $v_{k}\leq u_{k}^{p}$. 

Consider the point $\bar{x}_{k+1}$ defined by the intersection of
the line through $x_{k}$ perpendicular to $x_{k}-x_{0}$ and the
line passing through $\mathbf{0}$ and $(u_{k},-u_{k}^{p})$. One
can use geometrical arguments to see that $u_{k+1}\geq\bar{u}_{k+1}$,
where $u_{k+1}$ is the first coordinate of $x_{k+1}$ and $\bar{u}_{k+1}$
is the first coordinate of $\bar{x}_{k+1}$. We now bound $\bar{u}_{k+1}$
from below.

The point $\bar{x}_{k+1}$ is of the form $\lambda(u_{k},-u_{k}^{p})$.
From $[x_{k}-x_{0}]\perp[x_{k}-\bar{x}_{k+1}]$, we have 
\begin{eqnarray*}
\langle(u_{k}-1,u_{k}^{p}-0),(u_{k}-\lambda u_{k},u_{k}^{p}+\lambda u_{k}^{p})\rangle & = & 0\\
\lambda u_{k}(1-u_{k})+\lambda u_{k}^{2p}+(u_{k}-1)u_{k}+u_{k}^{2p} & = & 0\\
\lambda(1-u_{k}+u_{k}^{2p-1}) & = & 1-u_{k}-u_{k}^{2p-1}.
\end{eqnarray*}
This gives 
\[
u_{k+1}\geq\bar{u}_{k+1}=\lambda u_{k}=u_{k}\frac{1-u_{k}-u_{k}^{2p-1}}{1-u_{k}+u_{k}^{2p-1}}=u_{k}\left(1-\frac{2u_{k}^{2p-1}}{1-u_{k}+u_{k}^{2p-1}}\right),
\]
which ends our proof.
\end{proof}
We now make an estimate of how $\|x_{k}-x_{0}\|^{2}$ converges to
the optimal objective value of $1$ by analyzing $1-\|x_{k}-x_{0}\|^{2}$.
We have 
\begin{eqnarray*}
 &  & (1-u_{k})^{2}\leq\|x_{k}-x_{0}\|^{2}\leq(1-u_{k})^{2}+u_{k}^{2p}\\
 & \Rightarrow & 1-(1-u_{k})^{2}-u_{k}^{2p}\leq1-\|x_{k}-x_{0}\|^{2}\leq1-(1-u_{k})^{2}\\
 & \Rightarrow & 2u_{k}-u_{k}^{2}-u_{k}^{2p}\leq1-\|x_{k}-x_{0}\|^{2}\leq2u_{k}-u_{k}^{2}.
\end{eqnarray*}
 This means that $\{1-\|x_{k}-x_{0}\|^{2}\}$ converges to zero at
the same rate $\{2u_{k}\}$ converges to zero. By Lemma \ref{lem:lower-bdd-rate}
and Proposition \ref{prop:lower-bdd-u-i}, the convergence of $\{u_{k}\}$
to zero is seen to be at best $(\frac{1}{^{2p-1}\sqrt{k}})$. This
means that as we make $p$ arbitrarily large, the convergence of Haugazeau's
algorithm can be arbitrarily slow in the absence of the linear metric
inequality. It appears that enforcing the condition 
\[
C_{+}\cap C_{-}\subset\{x:\langle x_{0}-x_{k},x-x_{k}\rangle\leq0\}
\]
 makes Haugazeau's algorithm perform slower than the subgradient algorithm.

\bibliographystyle{amsalpha}
\bibliography{../refs}

\end{document}